\documentclass[11pt]{amsart}

\usepackage{
	amsmath,
	amsthm,
	amssymb,
	tikz,
	mathdots,
	fancyhdr,
	enumerate,
	setspace,
	multirow,
}

\usetikzlibrary{arrows,calc,matrix,shapes}

\usepackage[
	colorlinks=true,
	linkcolor=black,
	anchorcolor=black,
	citecolor=black,
	menucolor=black,
	filecolor=black,
	urlcolor=black
]
{hyperref}

\usepackage[center,small,sc]{caption}

\usepackage{multicol}

\setlength{\textwidth}{460pt} \setlength{\hoffset}{-45pt}



\theoremstyle{plain}
\newtheorem{thm}{Theorem}[section]
\newtheorem{lemma}[thm]{Lemma}
\newtheorem{prop}[thm]{Proposition}
\newtheorem{cor}[thm]{Corollary}

\theoremstyle{definition}
\newtheorem{dfn}[thm]{Definition}
\newtheorem{ex}[thm]{Example}

\newtheorem{remark}[thm]{Remark}
\numberwithin{equation}{section}
\numberwithin{figure}{section}
\numberwithin{table}{section}

\newcommand{\la}{\langle}
\newcommand{\ra}{\rangle}

\newcommand{\bw}{\boldsymbol{w}}
\newcommand{\coll}[2]{{\bf c}^{#1}_{#2}}
\newcommand{\pw}{\bw_0}
\newcommand{\sw}{\bw'}

\newcommand{\bv}{\boldsymbol v}

\newcommand{\PP}{\mathcal{P}}

\tikzset{tab/.style={matrix of math nodes,column sep=-.4, row sep=-.4,text height=8pt,text width=8pt,align=center}}

\begin{document}

\title [Fully commutative elements of type $D$]
{Fully commutative elements of type $D$ and\\ homogeneous representations of KLR-algebras}
\author[G. Feinberg]{Gabriel Feinberg}
\address{Department of Mathematics and Statistics,
Haverford College, Haverford, PA 19041, U.S.A. }
\email{gfeinberg@haverford.edu}
\author[K.-H. Lee]{Kyu-Hwan Lee}
\address{Department of
Mathematics, University of Connecticut, Storrs, CT 06269, U.S.A.}
\email{khlee@math.uconn.edu}

\subjclass[2010]{Primary 16G99; Secondary 05E10}
\begin{abstract}
In this paper, we decompose the set of fully commutative elements into natural subsets when the Coxeter group is of type $D_n$, and study combinatorics of these subsets, revealing  hidden structures. (We do not consider type $A_n$ first, since a  similar decomposition for type $A_n$ is trivial.) As an application, we classify and enumerate the homogeneous representations of the Khovanov--Lauda--Rouquier algebras of type $D_n$. 
\end{abstract}

\maketitle

\section*{Introduction}

An element $w$ of a Coxeter group is said to be {\em fully commutative} if any reduced word for $w$ can be obtained from any other  by  interchanges of adjacent commuting generators.
These elements were first introduced by Fan \cite{Fan1995, Fan1996} and Graham \cite{Graham} in their study of  the generalized Temperley--Lieb algebras, where they showed that the generalized Temperley--Lieb algebras have a linear basis indexed by the fully commutative elements. Soon after, these elements were extensively studied by
Stembridge in a series of papers \cite{Stembridge1996,Stembridge1997, Stembridge1998}. 
 He gave a classification of the Coxeter groups
having a finite number of fully commutative elements, which was  previously done
by Graham and Fan in the simply-laced case, and enumerated fully commutative elements for each of the finite types. 
The set of fully commutative elements  also has  connections to Kazhdan--Lusztig cells \cite{Fan1997, FanGreen, GreenLoson}.

In this paper,  we study fully commutative elements of the Coxeter groups of type $D_n$. We decompose the set of fully commutative elements into natural subsets, and study combinatorial properties of these subsets. More precisely, the subsets obtained from the decomposition of the set of fully commutative elements are called {\em packets}.
The main result (Theorem \ref{main D}) shows that each set in the $(n,k)$-packet has size equal to $C(n,k)$, the $(n,k)$-entry of Catalan's triangle, and implies the following identity (Corollary \ref{cor-end}):
\begin{equation} \label{id} \sum_{k=0}^n C(n,k) \left | \PP(n, k) \right | = \frac {n+3} 2 C_n -1 ,\end{equation} where $C_n$ is the $n^{\mathrm{th}}$ Catalan number and $|\PP(n,k)|$ is the number of elements in the $(n,k)$-packet.

It is quite intriguing that the set of fully commutative elements of type $D_n$ has such a structure. A similar decomposition of the set of fully commutative elements of type $A_n$ would be trivial as there would be only one packet.
Since the right-hand side of \eqref{id} is equal to the dimension of the Temperley--Lieb algebra of type $D_n$, the identity \eqref{id} suggests that there might be a  representation theoretic construction, in which   $C(n,k)$ would correspond to the dimension of a representation and  $|\PP(n,k)|$ to its multiplicity. 

As a related work,  Lejczyk  and Stroppel studied these fully commutative elements of type $D_n$ and the Temperley--Lieb algebra action in their paper \cite{Lejczyk2013}, where they also  gave a  diagrammatical description of the parabolic Kazhdan--Lusztig polynomials.

Actually, this research was motivated by the connection of the fully commutative elements  to the homogeneous representations of  the Khovanov--Lauda--Rouquier (KLR) algebras (also known as quiver Hecke algebras).
Introduced by Khovanov and Lauda \cite{Khov2009} and independently by Rouquier \cite{Rouq2008}, the KLR algebras have been the focus of many recent studies.  In particular, these algebras categorify the lower (or upper) half of a quantum group. 
In the paper \cite{Klesh2010}, Kleshchev and Ram significantly reduce the problem of describing the irreducible representations of the KLR algebras to the study of {\em cuspidal} representations for finite types. In the process of constructing the cuspidal representations, Kleshchev and Ram defined  a class of representations known as {\em homogeneous representations} \cite{Klesh2008}, those that are concentrated in a single degree.   Homogeneous representations  include most of the cuspidal representations for finite types with a suitable choice of ordering on words. In particular, for type $D_n$, a natural choice of ordering makes all the cuspidal representations homogeneous.  Therefore it is important to completely understand homogeneous representations.
The main point in our approach is that  homogeneous representations can be constructed from the sets of reduced words of fully commutative elements in the corresponding Coxeter group as shown in \cite{Klesh2008}.

As an application intended at the beginning of this research, our results classify and enumerate the homogeneous representations of KLR algebras according to the decomposition of the set of fully commutative elements. See Corollary  \ref{cor-DD}. 

The outline of this paper is as follows. In Section \ref{first}, we fix notations, briefly review the representations of KLR algebras,  and explain the relationship between homogeneous representations and fully commutative elements of a Coxeter group. In the next section, we study the set of fully commutative elements of type $D_n$, construct explicit bijections among packets, prove the main theorem (Theorem \ref{main D}) and obtain the identity \eqref{id}.

\subsection*{Acknowledgments}
The authors would like to thank Catharina Stroppel for helpful comments and the referee for many useful comments.
Part of this research was performed	while both authors were visiting Institute for Computational and Experimental Research in Mathematics (ICERM) during the spring of 2013 for the special program ``Automorphic Forms, Combinatorial Representation Theory and Multiple Dirichlet Series".  They wish to thank the organizers and staff. 

\section{Homogeneous Representations and Fully Commutative Elements} \label{first}

\subsection{Definitions}
To define a KLR algebra, we begin with a quiver $\Gamma$.  In this paper, we will focus mainly on quivers of Dynkin type $D_n$, but for the definition, any finite quiver with no double bonds will suffice.  Let $I$ be the set indexing the vertices of $\Gamma$,
and for indices $i\neq j$, we will say that $i$ and $j$ are neighbors if $i\rightarrow j$ or $i \leftarrow j$.  Define $Q_+ = \bigoplus_{i\in I} \mathbb{Z}_{\geq 0} \, \alpha_i$ as the non-negative lattice with basis $\{\alpha_i | i\in I\}$.
The set of all words in the alphabet $I$ is denoted by $\langle  I \rangle$, and for a fixed $\alpha = \sum_{i\in I}c_i\alpha_i \in Q_+$, let $\langle I \ra_\alpha$ be the set of words $\bw$ on the alphabet $I$ such that each $i\in I$ occurs exactly $c_i$ times in $\bw$.  We define the {\em height}   of $\alpha$ to be $\sum_{i\in I}c_i$.  We will write $\bw = [{w_1}, {w_2}, \hdots, {w_d}]$, $w_j \in I$.

Now, fix an arbitrary ground field $\mathbb F$ and choose an element $\alpha\in Q_+$.  Then the \textit{Khovanov--Lauda--Rouquier algebra} $R_\alpha$ is the associative $\mathbb F$-algebra generated by:
 \begin{itemize}
  \item  idempotents
  $\{e(\bw) ~|~ \bw\in \la I \ra_\alpha\}$,

   \item  symmetric generators $\{\psi_1, \hdots, \psi_{d-1}\}$ where $d$ is the height of the root $\alpha$,
  \item  polynomial generators
  $\{y_1, \hdots, y_d\}$,
  \end{itemize}

  subject to relations
  \begin{align} \label{eqn-1}
 & e(\bw)e(\bv) = \delta_{\bw\bv}e(\bw), \quad \quad \sum_{\bw\in \la I \ra_\alpha}e(\bw) = 1;
  \\ & y_ke(\bw) = e(\bw)y_k;
   \\  \label{eqn-2}
& \psi_ke(\bw) = e(s_k\bw)\psi_k;
  \\ 
& y_ky_\ell = y_\ell y_k;
 \\ 
& y_k\psi_\ell =\psi_\ell y_k \ (\hbox{for } k\neq \ell,\ell+1);
  \\  &   (y_{k+1}\psi_k - \psi_ky_k)e(\bw) =
    \left\{ \begin{array}{l l}
     e(\bw) & \hbox{if } w_k = w_{k+1},\\
      0 & \hbox{otherwise; }\\
    \end{array}\right.
   \\    & (\psi_k y_{k+1}- y_k\psi_k)e(\bw) =
    \left\{ \begin{array}{l l}
     e(\bw) & \hbox{if } w_k = w_{k+1},\\
      0 & \hbox{otherwise; }\\
    \end{array}\right.
      \\ 
    & \psi_k^2e(\bw) =
    \left\{ \begin{array}{l l}
    0 & \hbox{if } w_k = w_{k+1},\\
      (y_k-y_{k+1})e(\bw) & \hbox{if } w_k\rightarrow w_{k+1},\\
      (y_{k+1}-y_{k})e(\bw) & \hbox{if } w_k\leftarrow w_{k+1},\\
     e(\bw) & \hbox{otherwise; }\\
    \end{array}\right.
  \\ 
  &  \psi_k\psi_\ell = \psi_\ell\psi_k \ (\hbox{for } |k-\ell|>1);
  \\  &  (\psi_{k+1}\psi_{k}\psi_{k+1} -\psi_{k}\psi_{k+1}\psi_{k})e(\bw) =
    \left\{ \begin{array}{l l}
       e(\bw)& \hbox{if } w_{k+2} = w_k\rightarrow w_{k+1},\\
       -e(\bw)& \hbox{if } w_{k+2} = w_k\leftarrow w_{k+1},\\
     0 & \hbox{otherwise. }\\
    \end{array}\right.
  \end{align}
Here $\delta_{\bw\bv}$ in \eqref{eqn-1} is the Kronecker delta and, in \eqref{eqn-2}, $s_k$ is the $k^\textrm{th}$ simple transposition in the symmetric group $S_d$, acting on the word $\bw$ by swapping the letters in the $k^\textrm{th}$ and $(k+1)^\textrm{st}$ positions.
If $\Gamma$ is a Dynkin-type quiver, we will say that $R_\alpha$ is a KLR algebra of that type.

We impose a $\mathbb Z$-grading on $R_\alpha$ by
\begin{align} \label{eqn-degree}
 & \deg(e(\bw))  =  0, \quad
 \deg(y_i) = 2, \\  & \deg(\psi_i e(\bw))  =   \left\{\begin{array}{rl}
 -2 & \hbox{ if } w_i= w_{i+1},\\
 1 & \hbox{ if } w_i, w_{i+1} \hbox{ are neighbors in } \Gamma ,\\
 0 & \hbox{ if } w_i, w_{i+1} \hbox{ are not neighbors in } \Gamma .\\
\end{array}\right. \label{eqn-degree-1}
\end{align}

Set $R = \bigoplus_{\alpha\in Q_+} R_\alpha$, and let $\text{Rep}(R)$ be the category of finite dimensional graded $R$-modules, and denote its Grothendieck group by $[\text{Rep}(R)]$. Then $\text{Rep}(R)$ categorifies one half of the quantum group. More precisely, let $\mathbf f$ and $'\mathbf f$ be the Lusztig's algebras defined in \cite[Section 1.2]{lusztig2011introduction} attached to the Cartan datum encoded in the quiver $\Gamma$ over the field $\mathbb Q(v)$. We put $q=v^{-1}$ and $\mathcal A=\mathbb Z[q, q^{-1}]$, and let $'\mathbf f_{\mathcal A}$ and $\mathbf f_{\mathcal A}$ be the $\mathcal A$-forms of $'\mathbf f$ and $\mathbf f$, respectively. Consider the graded duals $'\mathbf f^*$ and $\mathbf f^*$, and their $\mathcal A$-forms \['\mathbf f^*_{\mathcal A} := \{ x \in {'\mathbf f}^* : x('\mathbf f_{\mathcal A}) \subset \mathcal A\} \ \text{ and } \ \mathbf f^*_{\mathcal A} := \{ x \in \mathbf f^* : x(\mathbf f_{\mathcal A}) \subset \mathcal A\}.\]
Then we have:
\begin{thm} \cite{Khov2009}
There is an $\mathcal A$-linear (bialgebra) isomorphism $\gamma^*:[\mathrm{Rep}(R)]  \xrightarrow{\sim} \mathbf f^*_{\mathcal A}$.
\end{thm}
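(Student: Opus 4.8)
The statement is the fundamental categorification theorem of Khovanov and Lauda \cite{Khov2009}, and the plan is to follow the line of argument of that paper at the level of strategy. The first move is to pass to the dual picture: let $K_0(R)=\bigoplus_{\alpha\in Q_+}K_0(R_\alpha)$ be the split Grothendieck group of finitely generated projective graded $R$-modules, paired with $[\mathrm{Rep}(R)]$ by the Cartan form $\langle[P],[M]\rangle=\dim_q\operatorname{Hom}_R(P,M)$. This pairing is perfect over $\mathcal A$, so it is enough to build an $\mathcal A$-algebra isomorphism $\gamma\colon\mathbf f_{\mathcal A}\xrightarrow{\sim}K_0(R)$ intertwining the coproducts and then to dualize to get $\gamma^*$. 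For the bialgebra structure, for $\alpha,\beta\in Q_+$ the assignment $e(\bw)\otimes e(\bv)\mapsto e(\bw\bv)$ realizes $R_\alpha\otimes R_\beta$ as a non-unital subalgebra of $R_{\alpha+\beta}$; the resulting induction and restriction functors, twisted by the powers of $q$ forced by the grading \eqref{eqn-degree}--\eqref{eqn-degree-1}, make $K_0(R)$ (and dually $[\mathrm{Rep}(R)]$) into a twisted $\mathcal A$-bialgebra, and a Mackey-style filtration of $\operatorname{Res}\circ\operatorname{Ind}$ yields the compatibility of product and coproduct matching the one on $\mathbf f$.

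Next I would construct $\gamma$. Since ${}'\mathbf f$ is free associative, the rule $\theta_i\mapsto[R_{\alpha_i}]$ (a one-dimensional algebra concentrated in degree $0$) extends uniquely to an algebra map ${}'\mathbf f_{\mathcal A}\to K_0(R)$; under it the divided power $\theta_i^{(n)}$ is sent to the class of the unique indecomposable projective over $R_{n\alpha_i}$, which is the nil-Hecke algebra $\mathrm{NH}_n$, Morita equivalent to $\mathbb F[y_1,\dots,y_n]^{S_n}$. One then verifies that these classes satisfy the quantum Serre relations: expanding a Serre element as an alternating sum of induction products and computing each summand as an explicit $\mathrm{NH}$-module shows the sum is $0$ in $K_0(R)$, and this identity is exactly the Grothendieck-group shadow of the last defining relation of $R_\alpha$, the deformed braid relation with its three sign cases. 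Hence $\gamma$ factors through $\gamma\colon\mathbf f_{\mathcal A}\to K_0(R)$.

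Then comes bijectivity. Surjectivity holds because every projective is a summand of some $R_\alpha e(\bw)$, and collecting equal adjacent letters of $\bw$ writes $R_\alpha e(\bw)$ as an induction product of the projectives $\mathrm{NH}_n$, hence as an $\mathcal A$-combination of elements in the image. For injectivity one pulls the homological (Euler) form on $K_0(R)$ back along $\gamma$ to a bilinear form on $\mathbf f_{\mathcal A}$; using the biadjunction of $\operatorname{Ind}$ and $\operatorname{Res}$ this form obeys the same inductive characterization as Lusztig's form $(\cdot,\cdot)$ and agrees with it on the $\theta_i$, so the two coincide, and nondegeneracy of $(\cdot,\cdot)$ on $\mathbf f$ over $\mathbb Q(v)$ forces $\gamma$ to be injective. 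Thus $\gamma$ is an isomorphism of twisted $\mathcal A$-bialgebras, and dualizing along the Cartan pairing produces the asserted $\gamma^*\colon[\mathrm{Rep}(R)]\xrightarrow{\sim}\mathbf f^*_{\mathcal A}$.

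The main obstacles, as I see them, are twofold. First, the categorified Serre relation: a finite but intricate computation with nil-Hecke modules, whose $q$-shifts and signs must be matched precisely against the last KLR relation. Second, injectivity, namely the identification of the homological form on $K_0(R)$ with Lusztig's bilinear form on $\mathbf f$; in the alternative quantum-shuffle route the same difficulty reappears as the need to prove that the $q$-character map $[\mathrm{Rep}(R)]\to\bigoplus_{\bw}\mathcal A\,\bw$ is injective with image exactly the copy of $\mathbf f^*_{\mathcal A}$ inside the shuffle algebra ${}'\mathbf f^*_{\mathcal A}$ cut out by the Serre relations. Everything else is bookkeeping with induction functors and gradings.
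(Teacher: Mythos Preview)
Your outline is a faithful sketch of the Khovanov--Lauda argument, but there is nothing in the paper to compare it against: the theorem is stated with the citation \cite{Khov2009} and no proof is given. The authors explicitly note right after the statement that they omit even the definitions of the bialgebra structures, referring the reader to \cite{Khov2009, Klesh2010} for details. So the ``paper's own proof'' is simply an appeal to the literature, and your proposal goes well beyond what the paper does.
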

Since it is not directly related to our purpose, we omit defining the bialgebra structures. Details  can be found in \cite{Khov2009, Klesh2010}.

\medskip

A word $\mathbf i \in \la I \ra_\alpha$ is naturally considered as an element of $'\mathbf f_{\mathcal A}^*$ to be dual to the corresponding monomial in $'\mathbf f_{\mathcal A}$.
Let $M$ be a finite dimensional graded $R_\alpha$-module. Define the $q$-character of $M$ by
\[ \text{ch}_q \, M := \sum_{\mathbf i \in \la I \ra_\alpha} (\dim_q M_{\mathbf i} )\, \mathbf i \in {'\mathbf f}^*_{\mathcal A} ,\] where $M_{\mathbf i} = e(\mathbf i) M$ and $\dim_q V:= \sum_{n \in \mathbb Z} (\dim V_n) \, q^n \in \mathcal A$ for $V=\oplus_{n \in \mathbb Z} V_n$.
A non-empty word $\mathbf i$ is called {\em Lyndon} if it is lexicographically smaller than all its proper right factors, or equivalently smaller that all its rotations.
For $x \in {'\mathbf f}^*$ we denote by $\max(x)$ the largest word appearing in $x$. A word $\mathbf i \in \langle I \rangle$ is called {\em good} if there is $x \in \mathbf f^*$ such that $\mathbf i = \max(x)$. Given a module $L \in \text{Rep}(R_\alpha)$, we say that $\mathbf i \in \langle I \rangle$ is the {\em highest weight} of $L$ if $\mathbf i = \max(\text{ch}_q \, L)$. An irreducible  module 
$L \in {R_\beta}$ is called {\em cuspidal} if its highest weight is a good Lyndon word. It is known that the set of good Lyndon words is in bijection with the set of positive roots $\Delta_+$ of the root system attached to $\Gamma$, when $\Gamma$ is of finite type.

Using the techniques developed by Leclerc in \cite{Leclerc2004}, Kleshchev and Ram, and then Melvin, Mondragon, and Hill showed:
\begin{thm}[\cite{Klesh2010}; \cite{Hill2012}, 4.1.1] Assume that $\Gamma$ is of finite Dynkin type. Then the good Lyndon words parameterize the cuspidal  representations of the KLR algebra $R_\beta$, $\beta \in \Delta_+$.  In turn, any irreducible graded $R_\alpha$-module  for $\alpha \in Q_+$ is given by an  irreducible head of a standard representation induced from cuspidal representations up to isomorphism and degree shift.
\end{thm}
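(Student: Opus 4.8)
The plan is to transport Leclerc's combinatorial description of the dual canonical basis via Lyndon words \cite{Leclerc2004} to the module category through the categorification isomorphism $\gamma^*$, and then to realize the cuspidal and standard modules by an induction on a dominance order on words. First I would fix, for each $\alpha \in Q_+$, a total order on $\langle I\rangle_\alpha$ refining the lexicographic order, and recall the bilexicographic (dominance) order. The essential bookkeeping tool is the shuffle-algebra realization of $\mathbf f^*$: under it $\mathrm{ch}_q$ intertwines the induction product with the shuffle product, so $\mathrm{ch}_q(M\circ N)$ is the shuffle of $\mathrm{ch}_q M$ and $\mathrm{ch}_q N$, and hence $\max(\mathrm{ch}_q(M\circ N))$ is the concatenation $\max(\mathrm{ch}_q M)\,\max(\mathrm{ch}_q N)$ whenever these are written in the correct (lexicographically non-increasing) order, with multiplicity controlled.

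For the first assertion, let $\mathbf i$ be a good Lyndon word; by the stated bijection it corresponds to a unique $\beta \in \Delta_+$, and $\mathbf i \in \langle I\rangle_\beta$. The categorified triangularity of the dual canonical basis against the word basis (a consequence of $\gamma^*$ together with Leclerc's triangularity) gives a unique self-dual irreducible $L(\mathbf i) \in \mathrm{Rep}(R_\beta)$ with $\max(\mathrm{ch}_q L(\mathbf i)) = \mathbf i$ occurring once. This $L(\mathbf i)$ is the cuspidal module attached to $\beta$: any word $\mathbf j$ appearing in $\mathrm{ch}_q L(\mathbf i)$ satisfies the cuspidality constraint on its left factors, since a violation would, via the shuffle estimate, force a word lexicographically larger than $\mathbf i$ into $\mathrm{ch}_q L(\mathbf i)$, contradicting $\mathbf i = \max$. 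Conversely, a cuspidal module has, by definition, a good Lyndon highest weight, hence is isomorphic to $L(\mathbf i)$ for that word; so good Lyndon words — equivalently positive roots — parameterize the cuspidal representations of the $R_\beta$, $\beta \in \Delta_+$.

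For the second assertion, take $\alpha \in Q_+$ and an irreducible $L \in \mathrm{Rep}(R_\alpha)$. Its highest weight $\mathbf i = \max(\mathrm{ch}_q L)$ is a good word, and by the Lyndon factorization theorem it has a unique expression $\mathbf i = \mathbf i_1 \mathbf i_2 \cdots \mathbf i_r$ with $\mathbf i_1 \geq \mathbf i_2 \geq \cdots \geq \mathbf i_r$ Lyndon, each factor being again good — this last point is Leclerc's, and is the place where the hypothesis ``good'' is genuinely used. Form the standard module $\Delta(\mathbf i) := L(\mathbf i_1)\circ L(\mathbf i_2)\circ\cdots\circ L(\mathbf i_r)$, up to degree shift. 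The shuffle computation of the first paragraph shows $\max(\mathrm{ch}_q \Delta(\mathbf i)) = \mathbf i$ with multiplicity one, so $\Delta(\mathbf i)$ has a unique irreducible head, and distinct good words yield non-isomorphic heads. Since the good words in $\langle I\rangle_\alpha$ index a basis of the weight-$\alpha$ component of $\mathbf f^*$, hence (via $\gamma^*$) are equinumerous with the irreducibles of $R_\alpha$, every irreducible is such a head; in particular $L$ is the irreducible head of a standard module induced from the cuspidals $L(\mathbf i_1),\dots,L(\mathbf i_r)$, up to isomorphism and degree shift.

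The main obstacle is the input from Leclerc's theory: the precise triangularity of $\max$ under the shuffle product, the cuspidality estimate it yields, and the fact that the Lyndon factors of a good word remain good. These are delicate combinatorial facts about the shuffle algebra that one must either cite from \cite{Leclerc2004} (as \cite{Klesh2010} and \cite{Hill2012} do) or reprove; by contrast, once $\gamma^*$ and this triangularity are available, the module-theoretic steps — existence and uniqueness of irreducible heads, and the matching count of irreducibles — are essentially formal.
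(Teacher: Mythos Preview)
The paper does not contain its own proof of this theorem: it is stated purely as a citation of \cite{Klesh2010} and \cite{Hill2012}, serving as background motivation for why cuspidal (and hence homogeneous) representations matter. There is therefore nothing in the paper to compare your proposal against.

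That said, your sketch is a faithful outline of the strategy actually used in \cite{Klesh2010} and \cite{Hill2012}: transport Leclerc's Lyndon-word combinatorics for the dual canonical basis across the isomorphism $\gamma^*$, use the shuffle-product estimate to control $\max(\mathrm{ch}_q)$ under induction, build standard modules from cuspidals via canonical Lyndon factorization, and count. You have correctly identified the genuinely hard inputs --- triangularity, the cuspidality estimate, and the fact that Lyndon factors of a good word are good --- as coming from \cite{Leclerc2004}. One point to be slightly more careful about: the uniqueness of the irreducible head of $\Delta(\mathbf i)$ needs more than ``$\mathbf i$ occurs with multiplicity one in $\mathrm{ch}_q \Delta(\mathbf i)$''; one also needs that every other good word occurring is strictly smaller, so that any quotient other than the head loses $\mathbf i$ entirely. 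This is implicit in what you wrote but worth making explicit if you were to flesh the argument out.
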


The above theorem clearly explains the importance of cuspidal representations. In the next subsection, we will introduce another class of representations which contains most of  the cuspidal representations with a suitable choice of ordering on words.

\subsection{Homogeneous representations}
  We define a \textit{homogeneous representation} of a KLR algebra to be an irreducible, graded representation fixed in a single degree (with respect to the $\mathbb Z$-grading described in  \eqref{eqn-degree} and \eqref{eqn-degree-1}).   
Homogeneous representations form an important class of irreducible modules since most of the cuspidal representations are  homogeneous  with a suitable choice of ordering on $\la I \ra$ (\cite{Klesh2010, Hill2012}). After introducing some terminology, we will  describe these representations in a combinatorial way. We continue to assume that $\Gamma$ is a simply-laced quiver.

  Fix an $\alpha\in Q_+$ and let $d$ be the height of $\alpha$. For any word $\bw\in \la I \ra_\alpha$, we say that the simple transposition $s_r\in S_d$ is an \textit{admissible transposition for $\bw$} if the letters $w_r$ and $w_{r+1}$ are neither equal nor neighbors in the quiver $\Gamma$.  Following Kleshchev and Ram \cite{Klesh2008}, we define the \textit{weight graph} $G_\alpha$ with vertices given by $\la I \ra_\alpha$.  Two words $\bw$, $\bv \in \la I \ra_\alpha$ are connected by an edge if there is an admissible transposition $s_r$ such that $s_r\bw = \bv$.

We say that a connected component $C$ of the weight graph $G_\alpha$ is \emph{homogeneous} if the following property holds for every $\bw\in C$:
  \begin{eqnarray} \label{homog}
 &   &  \hbox{If } w_r  =  w_s  \hbox{ for some } 1\leq r<s \leq d \hbox{, then there exist } t,u\\\nonumber
    & & \hspace*{1.8 cm} \hbox{ with } r<t<u<s \hbox{ such that }  w_r \hbox{ is neighbors with both } w_t \hbox{ and } w_u.
  \end{eqnarray}
A word satisfying condition~\eqref{homog} will be called a \textit{homogeneous word}.

  \begin{ex}  
  Consider the $A_3$ quiver
\[
\begin{tikzpicture}[>=triangle 45]
\draw[fill=black, opacity=1, ->] (1,0) circle (2pt)
 (2,0) circle (2pt)
 (3,0) circle (2pt);
\draw
	(0,0) node {$\Gamma =$}
	(1,-.1) node[below]{\footnotesize 1}
	(2,-.1) node[below]{\footnotesize 2}
	(3,-.1) node[below]{\footnotesize 3};
\draw[->] (1.2,0)--(1.8,0);
\draw[->] (2.8,0)--(2.2,0);
\end{tikzpicture}
\]
We then have $I=\{1,2,3\}$, and  choose the element $\alpha = \alpha_1+ 2\alpha_2 + \alpha_3\in Q_+$.  Then the weight graph $G_\alpha$ is given by:

\begin{figure}[h]
 \[
  \begin{tikzpicture}[x=1.5cm, y=1cm, scale=0.8]
  \draw[nodes={draw,rectangle, fill=white ,fill opacity=1, scale=0.8}]
        (1,0) node[above]{1322}--(1,-1) node{3122}
        (2,0) node[above]{2213}--(2,-1) node{2231}
         (3,0) node[above]{2132}--(3,-1) node{2312}
         (4,0) node[above] {1223}
         (4,-1) node {1232}
         (5,0) node[above] {2123}
         (5,-1) node {2321}
         (6,0) node[above] {3212}
         (6,-1) node {3221} ;
  \end{tikzpicture}
  \]
\end{figure}

 One can see that the only homogeneous component is
  \[
  \begin{tikzpicture}[x=1.5cm, y=1cm, scale=0.8]
  \draw[nodes={draw,rectangle, fill=white ,fill opacity=1, scale=0.8}]
         (3,0) node[above]{2132}--(3,-1) node{2312};
  \end{tikzpicture}
  \]
   In this case, the two instances of the letter $2$ have the neighbors $1$ and $3$ occurring between them.  Note that, for some $\alpha\in Q_+$ we may have that every component of the weight graph is homogeneous (e.g. $\alpha= \alpha_1 + \alpha_2 + \alpha_3$), while for others we may see that no components are homogeneous (e.g. $\alpha=2\alpha_1 + \alpha_2$).
  \end{ex}

 A main theorem of \cite{Klesh2008} shows that the homogeneous components of $G_\alpha$ exactly parameterize the homogeneous representations of the KLR algebra $R_\alpha$:

 \begin{thm}[\cite{Klesh2008}, Theorem 3.4] \label{com} Let $C$ be a homogeneous component of the weight graph $G_\alpha$.  Define an $\mathbb F$-vector space $S(C)$ with basis $\{v_{\bw}~|~\bw\in C \}$ labeled by the vertices in $C$.  Then we have an $R_\alpha$-action on $S(C)$ given by
  \begin{eqnarray*}
  	e(\bw')v_{\bw} & = & \delta_{\bw,\bw'}v_{\bw} \quad (\bw'\in \la I \ra_\alpha, \bw\in C), \\
  	y_rv_{\bw} &=& 0 \quad (1\leq r \leq d, \bw\in C), \\
  	\psi_rv_{\bw} & = &
  	  \left\{\begin{array}{ll}
  	  	v_{s_r\bw} & \textrm{ if } s_r\bw\in C \\
  	  	0 & \textrm{otherwise} 
  	  \end{array}\right.   \quad (1\leq r \leq d-1, \bw\in C), 
  \end{eqnarray*}
  which gives $S(C)$ the structure of a homogeneous, irreducible $R_\alpha$-module.  Further $S(C)\ncong S(C')$ if $C\neq C'$, and this construction gives all of the irreducible homogeneous modules, up to isomorphism.
 \end{thm}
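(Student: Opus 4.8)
The plan is to prove the four assertions in turn: that the displayed formulas define an $R_\alpha$-module structure on $S(C)$; that this module is homogeneous and irreducible; that non-isomorphic components give non-isomorphic modules; and that every irreducible homogeneous module arises this way. The first three are essentially bookkeeping, and the real content is in the last. For the module axioms I would check the defining relations of $R_\alpha$ on the basis $\{v_\bw : \bw\in C\}$. The idempotent relations are immediate, and since every $y_r$ acts by $0$, each relation involving a $y$ reduces to showing that both sides vanish; the only relations whose right-hand side is $\pm e(\bw)$ occur when $w_k=w_{k+1}$, respectively when $w_{k+2}=w_k$ is a neighbor of $w_{k+1}$, and these configurations cannot occur in a word of a homogeneous component, because \eqref{homog} forbids equal letters at distance $1$ or $2$. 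For the quadratic and braid relations among the $\psi$'s the key point is that, for $\bw\in C$ and any $k$, the letters $w_k, w_{k+1}$ are distinct, so either $s_k$ is admissible for $\bw$ (and then $s_k\bw\in C$) or $w_k$ and $w_{k+1}$ are neighbors (and then $s_k\bw\notin C$): the latter holds because the subword of $\bw$ on the alphabet $\{w_k,w_{k+1}\}$ is preserved by every admissible transposition but is changed by the swap $s_k$, so $\bw$ and $s_k\bw$ lie in different components. Granting this, $\psi_k^2e(\bw)$, the far commutations, and the braid relation all reduce to identities among permutations of the positions of $\bw$, the right side of the KLR braid relation being $0$ because $w_{k+2}\ne w_k$.

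Homogeneity of $S(C)$ is forced by the grading: if $\bv=s_r\bw$ with $s_r$ admissible for $\bw$, then $\deg(\psi_re(\bw))=0$, so $v_\bv=\psi_rv_\bw$ lies in the same degree as $v_\bw$, and connectedness of $C$ lets us place all of $S(C)$ in a single degree. For irreducibility, a nonzero submodule contains some $e(\bw_0)v\ne 0$, hence $v_{\bw_0}$, and applying products of $\psi_r$'s along admissible paths (which exist since $C$ is connected) produces every $v_\bw$. For distinctness, the set of words $\bw$ with $e(\bw)S(C)\ne 0$ is exactly the vertex set of $C$, which is a degree-shift invariant, and distinct components are disjoint, so $S(C)\ncong S(C')$ when $C\ne C'$.

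The substantive assertion is that every irreducible homogeneous $L$ equals some $S(C)$. After a shift, place $L$ in degree $0$; since $\deg y_r=2$, each $y_r$ sends $L_0$ into $L_2=0$, so the $y$'s act by $0$. I would first show that every $\bw$ with $e(\bw)L\ne 0$ is a homogeneous word. If not, choose $\bw$ in the support and a pair $r<s$ with $w_r=w_s$ violating \eqref{homog}, with $s-r$ minimal, and use admissible transpositions (which keep $\bw$ inside its component, hence in the support) to slide the two equal letters together; minimality of $s-r$ prevents a third equal letter or a second neighbor of $w_r$ from obstructing the slide, and one lands on a word $\bw'$ in the support with either $w'_k=w'_{k+1}$ or $w'_k=w'_{k+2}$ a neighbor of $w'_{k+1}$. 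In the first case the relation $(y_{k+1}\psi_k-\psi_ky_k)e(\bw')=e(\bw')$ forces $0=v'$ for any $0\ne v'\in e(\bw')L$, since $y_k$ kills $v'$ and $\psi_k$ has degree $-2$ on $L_0$; in the second case the braid relation $(\psi_{k+1}\psi_k\psi_{k+1}-\psi_k\psi_{k+1}\psi_k)e(\bw')=\pm e(\bw')$ gives the same contradiction, because $\psi_k$ and $\psi_{k+1}$ each have degree $1$ on $e(\bw')L$ and hence act there by $0$. So the support of $L$ consists of homogeneous words. Next, for such a $\bw$, if $s_k$ is admissible then $\psi_k^2e(\bw)=e(\bw)$ (the ``otherwise'' case, using $y=0$), so $\psi_k\colon e(\bw)L\to e(s_k\bw)L$ is an isomorphism, while if $s_k$ is not admissible it has nonzero degree on $e(\bw)L$ and acts by $0$; hence $\dim e(\bw)L$ is constant on each connected component, the support of $L$ is a union of components, and since the span of the spaces $e(\bw)L$ over one component is a submodule, irreducibility of $L$ pins the support down to a single (necessarily homogeneous) component $C$. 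It remains to see that $\dim e(\bw)L=1$ and $L\cong S(C)$. I would do this by comparing $q$-characters: $\mathrm{ch}_q S(C)=\sum_{\bw\in C}\bw$ while $\mathrm{ch}_q L=m\sum_{\bw\in C}\bw$ with $m=\dim e(\bw)L$; since the character map is injective on $[\mathrm{Rep}(R)]$ by \cite{Khov2009} and the classes of simples are linearly independent there, $m=1$ and $[L]=[S(C)]$, whence $L\cong S(C)$ up to a degree shift. (Alternatively one proves directly that the $\psi$-action on $L$ is coherent — any admissible loop at a fixed $\bw_0$ acts as the identity on $e(\bw_0)L$ — which realizes $L\cong S(C)\otimes_{\mathbb F}e(\bw_0)L$ and forces $\dim e(\bw_0)L=1$ by irreducibility.)

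The main obstacle is this last assertion. Ruling out a non-homogeneous word in the support of a single-degree module needs the careful, slightly fiddly induction on the distance $s-r$ via admissible transpositions sketched above, where the point is to isolate one of the two ``forbidden'' local patterns and hit it with the appropriate relation; and the closing identification $L\cong S(C)$ requires either the injectivity of the character map or a hands-on proof that the $\psi$-action on $L$ is path-independent. Everything else — the module axioms, homogeneity, irreducibility, and distinctness of the $S(C)$ — should be routine once the two combinatorial facts about homogeneous components (no equal letters at distance $\le 2$, and neighbor-swaps leave the component) are in place.
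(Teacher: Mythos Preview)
The paper does not give its own proof of this theorem; it is quoted verbatim from \cite{Klesh2008} (their Theorem~3.4) and used as a black box, so there is nothing in the present paper to compare your argument against.

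That said, your outline is correct and follows the natural route. Two small remarks. First, in verifying the braid relation on $S(C)$ it is worth making explicit why the \emph{left}-hand side also vanishes when some pair among $w_k,w_{k+1},w_{k+2}$ are neighbours: each of the triple products $\psi_{k+1}\psi_k\psi_{k+1}$ and $\psi_k\psi_{k+1}\psi_k$, applied to $v_\bw$, performs exactly one swap of each unordered pair $\{w_k,w_{k+1}\}$, $\{w_{k+1},w_{k+2}\}$, $\{w_k,w_{k+2}\}$, so if any of these pairs are neighbours both products die at that step. Second, for the final identification $L\cong S(C)$, your character argument is the clean and fully justified option (injectivity of $\mathrm{ch}_q$ on $[\mathrm{Rep}(R)]$ together with the fact that classes of simples form an $\mathcal A$-basis forces $m=1$ and $[L]=[S(C)]$); the alternative you mention via path-independence of admissible loops is true but is itself a nontrivial combinatorial statement that you have not proved, so I would not rely on it.
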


As a result, the task of identifying homogeneous modules of a KLR algebra is reduced to identifying homogeneous components in a weight graph.  This is simplified further by the following lemma:

  \begin{lemma}[\cite{Klesh2008}, Lemma 3.3] A connected component $C$ of the weight graph $G_\alpha$ is homogeneous if and only if an element $\bw\in C$ satisfies the condition~\eqref{homog}.
  \end{lemma}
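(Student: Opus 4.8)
The plan is to prove the two implications separately, the forward one being essentially a tautology and the reverse one the real content. If $C$ is homogeneous then by definition every vertex of $C$ satisfies~\eqref{homog}, so a fortiori some $\bw\in C$ does. For the converse, I would show that condition~\eqref{homog} propagates along the edges of $G_\alpha$: since $C$ is connected, any two vertices are linked by a path of admissible transpositions, so it is enough to prove the local statement that if $\bw$ satisfies~\eqref{homog} and $\bv=s_r\bw$ for an admissible transposition $s_r$, then $\bv$ satisfies~\eqref{homog}. One then inducts along a path. (Only one direction of propagation is needed, because admissibility of $s_r$ for $\bw$ depends only on the unordered pair $\{w_r,w_{r+1}\}$, which equals $\{v_r,v_{r+1}\}$, so the edge relation is symmetric, as it must be.)

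For the local statement, write $i\sim j$ when $i$ and $j$ are neighbors in $\Gamma$; admissibility of $s_r$ for $\bw$ says exactly $w_r\ne w_{r+1}$ and $w_r\not\sim w_{r+1}$, and $\bv$ differs from $\bw$ only in that $v_r=w_{r+1}$ and $v_{r+1}=w_r$. First I would fix a pair $p<q$ with $v_p=v_q$ and seek $t,u$ with $p<t<u<q$, $v_p\sim v_t$ and $v_p\sim v_u$. Since $w_r\ne w_{r+1}$, the pair $\{p,q\}$ cannot be $\{r,r+1\}$, so at least one endpoint lies outside $\{r,r+1\}$ and carries an unchanged letter; this transports the coincidence $v_p=v_q$ to a coincidence of equal letters in $\bw$ at two positions, to which~\eqref{homog} applies and yields witnesses $t_0<u_0$ strictly between them. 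The remaining task is to convert $t_0,u_0$ into witnesses for $\bv$: when $\{t_0,u_0\}$ avoids $\{r,r+1\}$ they work verbatim, since all relevant letters and the interval $(p,q)$ are unaffected; when a witness lands on $r$ or $r+1$, I would replace it by the other of the two swapped positions---the one now carrying the letter that was verified to be a neighbor of $v_p$---and check that this index still lies strictly between $p$ and $q$, which is a matter of comparing integers once the location of the endpoints relative to $\{r,r+1\}$ is accounted for.

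The one place where admissibility is genuinely used, and the only real obstacle, is the configuration in which an endpoint of the new pair equals $r$ or $r+1$: for instance if $q=r$, then $v_p=v_r=w_{r+1}$ forces $w_p=w_{r+1}$, and a witness of~\eqref{homog} for $\bw$ sitting at position $r$ would assert $w_{r+1}\sim w_r$, contradicting $w_r\not\sim w_{r+1}$; hence no witness can occupy the swapped position, and all witnesses automatically fall in the admissible range. The symmetric cases ($q=r+1$, $p=r$, $p=r+1$) are handled identically. Once every case is dispatched, condition~\eqref{homog} is preserved under admissible transpositions, the induction along a path in $C$ goes through, and the lemma is proved. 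I expect no conceptual difficulty beyond this use of admissibility; the work is the careful enumeration of cases and the verification that each replaced index stays in the open interval $(p,q)$.
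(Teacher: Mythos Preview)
The paper does not give its own proof of this lemma; it is simply quoted from \cite{Klesh2008}, so there is nothing to compare against. Your direct argument---propagating condition~\eqref{homog} along a single admissible transposition and then along a path in $C$---is correct and is the natural approach. Your identification of the crucial point is exactly right: admissibility ($w_r\not\sim w_{r+1}$) is what guarantees that when an endpoint of the repeated pair sits at $r$ or $r{+}1$, no witness from $\bw$ can occupy the ``bad'' swapped position, so the witnesses automatically lie in the required open interval for $\bv$. One small addition worth making explicit: after replacing a witness index by its swap partner you should note that the two resulting indices remain distinct (the swap is a bijection) and may then be relabeled so that $t<u$; this is implicit in your sketch but completes the verification.
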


 Recall that we call a word satisfying condition~\eqref{homog} a {homogeneous word}.  The homogeneous words have other combinatorial characterizations, which we explore in the next subsection.


\subsection{Fully commutative elements of Coxeter groups}

Since the homogeneity of $\bw \in \la I \ra$ does not depend on the  orientation of a quiver,  it is enough to consider Dynkin diagrams and the corresponding Coxeter groups.
Given a simply laced Dynkin diagram,  the corresponding Coxeter group will be denoted by $W$ and the generators by $s_i$, $i \in I$. A reduced expression $s_{i_1} \cdots s_{i_r}$ will be identified with the word $[i_1,  \dots, i_r]$ in  $ \la I \ra$. For example, in the type $A_4$, the reduced expressions 
\[
s_3s_1s_2s_3s_4 =  s_1s_3s_2s_3s_4 = s_1s_2s_3s_2s_4 = s_1s_2s_3s_4s_2
\]
 are identified with the words \[
    [3,1,2,3,4], \  [1,3,2,3,4] ,\   [1,2,3,2,4], \  [1,2,3,4,2] ,\  \text{ respectively}.
 \]
The identity element will be identified with the empty word $[~]$.

Assume that  $W$ is a simply-laced Coxeter group. An element $w \in W$ is said to be {\em fully commutative} if any reduced word for $w$ can be obtained from any other  by  interchanges of adjacent commuting generators, or equivalently if no reduced word for $w$ has $[i, i', i]$ as a subword where $i$ and $i'$ are neighbors in the Dynkin diagram. We make several observations, which are important for our study of homogeneous representations, and list them in the following lemma. These observations were first made by Kleshchev and Ram.

\begin{lemma} \cite{Klesh2008} \label{lem-equiv}
\hfill
\begin{enumerate}
\item A homogeneous component of the weight graph  $G_\alpha$ contains as its vertices exactly the set of reduced expressions for a fully commutative element in $W$. 

\item The set of homogeneous components is in bijection with the set of fully commutative elements in $W$. 

\item  Any KLR algebra of type $A_n~(n\geq 1)$, $D_n~(n\geq 4)$, or $E_n~(n=6,7,8)$ has finitely many irreducible homogeneous representations.

\end{enumerate}

\end{lemma}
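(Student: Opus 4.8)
The plan is to prove the three parts of Lemma \ref{lem-equiv} in sequence, with part (1) doing most of the work and parts (2) and (3) following quickly.

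\textbf{Part (1).} The strategy is to match up the two combinatorial structures directly. Fix $\alpha \in Q_+$ of height $d$. First I would observe that the admissible transpositions used to build the weight graph $G_\alpha$ are exactly the commuting-generator interchanges: $s_r$ is admissible for $\bw$ precisely when $w_r$ and $w_{r+1}$ are neither equal nor neighbors in the Dynkin diagram, which is exactly the condition that $s_{w_r}$ and $s_{w_{r+1}}$ commute in $W$. Now suppose $C$ is a homogeneous component and pick $\bw \in C$; by Lemma 3.3 of \cite{Klesh2008} (quoted above), every element of $C$ satisfies \eqref{homog}. The key claim is that $\bw$, read as a word $[w_1, \dots, w_d]$, is a \emph{reduced} expression for the element $w := s_{w_1} \cdots s_{w_d} \in W$, and moreover that $w$ is fully commutative. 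To see reducedness: if $\bw$ were not reduced, some reduced expression for $w$ would be obtained by deleting two letters, and by the exchange/deletion condition one can locate a subword of the form $[i, \dots, i]$ with the same letter $i$ at both ends and only letters commuting with $i$ in between — but condition \eqref{homog} forbids exactly this (it forces a neighbor of $i$ to occur strictly between any two equal letters $i$). So $\bw$ is reduced. Full commutativity then follows because any braid move $[i,i',i] \leftrightarrow [i',i,i']$ on a neighboring pair would, after commutations, again manufacture a repeated letter with only commuting letters between, contradicting \eqref{homog} applied to some word in $C$ (using that the braid-move equivalence class and the commutation class would have to intersect $C$). Conversely, given a fully commutative $w$, its set of reduced words is a single commutation class, hence is connected in $G_\alpha$ via admissible transpositions and forms a connected component; and one checks each such reduced word satisfies \eqref{homog}, because a violation would give a non-reduced rewriting or a forbidden braid subword. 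Thus the homogeneous components and the commutation classes of reduced words of fully commutative elements coincide.

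\textbf{Parts (2) and (3).} Part (2) is then immediate: part (1) gives a well-defined map from homogeneous components to fully commutative elements (a component maps to the common element $w$ of which its vertices are the reduced words), it is injective because distinct fully commutative elements have disjoint sets of reduced words, and it is surjective because every fully commutative element's reduced-word set is a homogeneous component. Part (3) follows from the classical fact, due to Stembridge \cite{Stembridge1996} (building on Fan and Graham), that the Coxeter groups with finitely many fully commutative elements include all of types $A_n$, $D_n$, $E_n$; combined with the bijection of part (2), finitely many fully commutative elements means finitely many homogeneous components, hence by Theorem \ref{com} finitely many irreducible homogeneous representations (note $R = \bigoplus_\alpha R_\alpha$, and only finitely many $\alpha$ contribute a nonempty $G_\alpha$ with a homogeneous component since each such component is a nonempty set of words of some fixed length arising from a group element).

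\textbf{Main obstacle.} The delicate point is the equivalence, inside part (1), between the word-combinatorial condition \eqref{homog} and the Coxeter-theoretic notion of being a reduced word for a fully commutative element. One direction needs a careful argument that \eqref{homog} rules out \emph{all} non-reducedness and \emph{all} hidden braid relations, not just the most obvious ones; this is where one must invoke the deletion/exchange condition and the description of fully commutative elements as those having no reduced word containing $[i,i',i]$ as a (contiguous, after commutations) subword. I expect this to be the heart of the proof; the rest is bookkeeping. Since the lemma is attributed to Kleshchev and Ram, I would also cite \cite{Klesh2008} for the precise statements rather than reproving everything from scratch.
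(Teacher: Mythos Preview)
Your proposal is correct and follows the same logic as the paper, only in much greater detail: the paper's proof is a three-line sketch that says part (1) ``follows from the definitions'' (noting only that \eqref{homog} forbids a subword $[i,i',i]$ with $i,i'$ neighbors) and that parts (2) and (3) are consequences of (1), deferring the substance to \cite{Klesh2008}. Your identification of the reducedness/full-commutativity equivalence as the delicate point is exactly right, and your decision to cite \cite{Klesh2008} for it matches what the paper does; the only caveat is that your phrasing of the deletion-condition step is a bit loose (deletion does not directly hand you a factor $[i,\ldots,i]$ with only commuting letters in between---one really wants the characterization via nil-moves after commutations, as in Stembridge or Kleshchev--Ram), but since you flag this and defer to the source, it is not a gap.
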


\begin{proof}
Part (1) follows from the definitions; in particular, the condition \eqref{homog} implies that no word in a homogeneous component has $[i, i', i]$ as a subword where $i$ and $i'$ are neighbors. Parts (2) and (3) are consequences of (1).
\end{proof}

Stembridge \cite{Stembridge1996} classified all of the Coxeter groups that have finitely many fully commutative elements. The list includes the infinite families of types $E_n$, $F_n$ and $H_n$. His results completed the work of Fan \cite{Fan1996}, who had done this for the simply-laced types.
In the same paper \cite{Fan1996}, Fan showed that the fully commutative elements parameterized natural bases for corresponding quotients of Hecke algebras.  In type $A_n$, these give rise to the Temperley--Lieb algebras (see \cite{Jones1987}).
Fan and Stembridge also enumerated the set of fully commutative elements.  In particular, they showed the following.

\begin{prop}[\cite{Fan1996,Stembridge1998}]\label{full comm count}  Let $C_n$ be the $n^\textrm{th}$ Catalan number, i.e. $C_n = \frac{1}{n+1}{2n\choose n}$.
Then the number of fully commutative elements in the Coxeter group of type $A_n$ is $C_{n+1}$, and that of type  $D_n$ is  $\frac{n+3}{2}C_n-1$.
\end{prop}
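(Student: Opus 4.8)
The plan is to treat the two types separately; the type $A_n$ count is classical, while type $D_n$ calls for a stratification by support. For type $A_n$ I would identify the Coxeter group with $S_{n+1}$ via $s_i\mapsto (i,i+1)$. The only neighbouring pairs of generators being $\{s_i,s_{i+1}\}$, an element fails to be fully commutative exactly when some reduced word contains $[i,i+1,i]$ or $[i+1,i,i+1]$, which one checks happens precisely when the permutation has a decreasing subsequence of length three, i.e.\ fails to be $321$-avoiding. Since the number of $321$-avoiding permutations of an $m$-element set is the Catalan number $C_m$ --- e.g.\ by the classical bijection with Dyck paths, or via Robinson--Schensted, under which the $321$-avoiding permutations are those whose two tableaux have at most two rows --- taking $m=n+1$ gives $C_{n+1}$.

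For type $D_n$ ($n\ge 4$), label the Coxeter graph so that $s_1,s_2$ are the fork nodes, each joined only to $s_3$, and $s_3-s_4-\cdots-s_n$ is a path. I would stratify the fully commutative elements $w$ by how the (well-defined) support $\operatorname{supp}(w)$ meets $\{s_1,s_2\}$. If $s_1\notin\operatorname{supp}(w)$ then $w$ lies in the standard parabolic subgroup generated by $\{s_2,s_3,\ldots,s_n\}$, whose graph is the path $s_2-s_3-\cdots-s_n$ of type $A_{n-1}$; since full commutativity passes both ways between a standard parabolic and the ambient group, such $w$ are exactly the fully commutative elements of a Coxeter group of type $A_{n-1}$, so there are $C_n$ of them. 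Likewise there are $C_n$ with $s_2\notin\operatorname{supp}(w)$ and $C_{n-1}$ with neither $s_1$ nor $s_2$ in the support, so by inclusion--exclusion the number of fully commutative $w$ with $\{s_1,s_2\}\not\subseteq\operatorname{supp}(w)$ is $2C_n-C_{n-1}$.

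It then remains to count the fully commutative $w$ with $\{s_1,s_2\}\subseteq\operatorname{supp}(w)$; denote this number $X_n$, so that the theorem amounts to $X_n=\frac{n-1}{2}C_n+C_{n-1}-1$. Here I would pass to the heap of $w$: the occurrences of $s_3$ form a chain into which every occurrence of $s_1$ and of $s_2$ must be inserted, and the requirement that no $[i,i',i]$ with $s_i,s_{i'}$ neighbours be realizable as a consecutive subword forces the fork letters to sit "in parallel" across repeated $s_3$'s --- inside diamonds $s_3<\{s_1,s_2\}<s_3$, around which the chain generators $s_4,s_5,\ldots$ may be nested --- up to a few degenerate boundary configurations. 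Excising this fork gadget should leave a fully commutative element of a smaller Coxeter group of type $A$ or $D$, giving a recursion for $X_n$ which, with a base case such as $D_4$ (where $X_4=25$ is found by hand), yields the claimed value; adding $2C_n-C_{n-1}$ recovers $\frac{n+3}{2}C_n-1$. Alternatively, one may realize $W(D_n)$ as the group of even-signed permutations, use a pattern-avoidance description of its fully commutative elements, and sum the resulting generating function --- essentially Stembridge's route.

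The hard part is the count $X_n$. Because of the fork, a generator can occur several times in a reduced word --- already $s_1$ occurs twice in the fully commutative element $s_1s_3s_2s_4s_3s_1$ of $D_4$ --- so the delicate step is the local analysis at the fork: classifying the admissible arrangements of the $s_1$-, $s_2$-, $s_3$- and $s_4$-occurrences in the heap and tracking how removing the fork gadget interacts with the rest of the heap, so that the recursion has the right shape and no over- or under-counting occurs. This is the real content of the Fan--Stembridge computation; the type $A_n$ count and the parabolic inclusion--exclusion for $D_n$ are routine by comparison.
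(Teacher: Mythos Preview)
The paper does not prove this proposition at all: it is simply quoted from Fan and Stembridge with citations, and used as input. So there is no ``paper's own proof'' to compare against. That said, the paper's main Theorem~\ref{main D} together with the explicit packet sizes does give an alternative derivation of the $D_n$ count, using only the type~$A$ case of the proposition: one shows directly that each collection in $\PP(n,k)$ has $C(n,k)$ elements, and then summing $\sum_k C(n,k)\,|\PP(n,k)|$ yields $\frac{n+3}{2}C_n-1$. This is a genuinely different route from yours---a decomposition by \emph{suffix} of a canonical reduced word rather than by how the support meets the fork---and it buys explicit bijections between collections rather than a recursion on~$n$.

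Your type~$A$ argument (fully commutative $\Leftrightarrow$ $321$-avoiding $\Rightarrow$ Catalan) is standard and correct. For type~$D$, your inclusion--exclusion on the fork support is fine and the arithmetic check $X_n=\frac{n-1}{2}C_n+C_{n-1}-1$ is consistent. The gap is exactly where you flag it: the computation of $X_n$. You describe the heap picture accurately---the $s_3$'s form a chain, the fork letters must interleave in diamonds $s_3<\{s_1,s_2\}<s_3$---but ``excising the fork gadget should leave a fully commutative element of a smaller type $A$ or $D$, giving a recursion'' is not yet an argument. You have not said what the excision map is, what smaller group you land in, how the boundary cases (an $s_1$ or $s_2$ below the first $s_3$ or above the last) are handled, nor written down and solved the recursion. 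Until that is done, this is an outline of Stembridge's strategy rather than a proof; the actual bijective/recursive bookkeeping at the fork is the entire content of the $D_n$ case.
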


We immediately obtain a consequence on homogeneous representations from Lemma \ref{lem-equiv}.

\begin{cor}
 A KLR algebra $R = \bigoplus_{\alpha\in Q_+}R_\alpha$ of type $A_n$ has $C_{n+1}$ irreducible homogeneous representations, while a KLR algebra of type $D_n$ has $\frac{n+3}{2}C_n-1$ irreducible homogeneous representations.
\end{cor}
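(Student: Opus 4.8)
The plan is to chain together the three results already in hand --- Theorem \ref{com}, Lemma \ref{lem-equiv}, and Proposition \ref{full comm count} --- once the passage from a single $R_\alpha$ to $R=\bigoplus_{\alpha\in Q_+}R_\alpha$ is justified. First I would observe that the pieces $R_\alpha$ are mutually orthogonal two-sided parts of $R$: the defining relations \eqref{eqn-1}--\eqref{eqn-degree-1} involve no generator connecting words of different content, and $e(\bw)e(\bv)=0$ whenever $\bw$ and $\bv$ have different content, so $R_\alpha R_\beta=0$ for $\alpha\neq\beta$ and each $R_\alpha$ is a unital algebra with identity $\sum_{\bw\in\la I\ra_\alpha}e(\bw)$. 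Hence any finite dimensional graded $R$-module is supported on finitely many $R_\alpha$ and decomposes accordingly, so an irreducible one is an irreducible graded $R_\alpha$-module for a unique $\alpha\in Q_+$, and conversely. Thus the number of irreducible homogeneous $R$-modules (up to isomorphism, normalizing each to a single degree as in Theorem \ref{com}) is the sum over $\alpha\in Q_+$ of the numbers of irreducible homogeneous $R_\alpha$-modules.

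Next, by Theorem \ref{com}, for each fixed $\alpha$ the assignment $C\mapsto S(C)$ is a bijection from the set of homogeneous components of the weight graph $G_\alpha$ onto the set of isomorphism classes of irreducible homogeneous $R_\alpha$-modules: each $S(C)$ is concentrated in one degree (indeed $y_r$ acts by $0$ and, on a homogeneous component, every admissible transposition has $\deg(\psi_r e(\bw))=0$), distinct components give non-isomorphic modules, and every irreducible homogeneous module arises this way. Summing over $\alpha\in Q_+$, the number of irreducible homogeneous $R$-modules equals the total number of homogeneous components of the graphs $G_\alpha$, $\alpha\in Q_+$.

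Then, by Lemma \ref{lem-equiv}(2), this total is exactly the number of fully commutative elements of the Coxeter group $W$ of the given Dynkin type: concretely, a homogeneous component of $G_\alpha$ is precisely the set of reduced words of some fully commutative element whose content (the multiset of letters in any reduced word) is $\alpha$, so the partition of fully commutative elements by content matches the partition of homogeneous components by $\alpha$. Finally, Proposition \ref{full comm count} evaluates this count: it is $C_{n+1}$ when $W$ is of type $A_n$ and $\tfrac{n+3}{2}C_n-1$ when $W$ is of type $D_n$, which is the assertion; finiteness is independently guaranteed by Lemma \ref{lem-equiv}(3).

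Since every step is a direct appeal to a cited statement, there is no substantive obstacle. The only points needing care are the reduction from $R$ to the blocks $R_\alpha$ (handled by the orthogonality observation above) and the grading bookkeeping (handled by normalizing each homogeneous module to a single degree, consistently with Theorem \ref{com} and Lemma \ref{lem-equiv}); if one instead counts graded modules up to isomorphism without fixing the shift, each count is inflated by the trivial $\ZZ$-family of shifts, so the normalization is exactly what makes the stated numbers correct.
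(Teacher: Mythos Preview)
Your argument is correct and follows the same route as the paper, which simply records the corollary as an immediate consequence of Lemma~\ref{lem-equiv} together with Proposition~\ref{full comm count} (with Theorem~\ref{com} implicit). You have spelled out the block decomposition of $R$ and the grading normalization that the paper leaves tacit, but the logical skeleton is identical.
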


In \cite{Klesh2008}, Kleshchev and Ram parameterized homogeneous representations using skew shapes. In this paper, we will decompose the set of fully commutative elements to give a finer enumeration of homogeneous representations in type $D_n$. 
More precisely, our main theorem (Theorem \ref{main D}) proves that these homogeneous representations can be organized naturally into {\em packets} (defined in Section \ref{sec-pac}), and counted by Catalan's triangle.
Note that these results contribute not only to the combinatorics of the representation theory of KLR algebras, but also to the study of fully commutative elements of Coxeter groups.


\section{Packets in  Type $D_n$}\label{ch:D}

In this section, we will focus on fully commutative elements and representations of KLR-algebras of type $D_n$.  That is, we shall assume that $\Gamma$ is a quiver whose underlying graph is of the form:
\[
 \begin{tikzpicture}[x=2cm, scale=.5]
    \foreach \x in {0,1.5,3}
    \draw[xshift=\x,thick, fill=black] (\x,0) circle (1.5 mm);
    \draw[thick, fill=black] (4.5,1.5) circle (1.5 mm);
    \draw[thick, fill=black] (4.5,-1.5) circle (1.5 mm);
    \draw (1.5,0)--(3,0);
    \draw[dotted, thick] (0,0) -- +(1.5,0);
    \foreach \y in {1.5}
    \draw (3,0)--(4.5,1.5)
          (3,0)--(4.5,-1.5);
    \draw (0,-.3) node[below]{\footnotesize1};
    \draw (1.5,-.3) node[below]{\footnotesize $n-3$};
    \draw (3,-.3) node[below]{\footnotesize $n-2$};
    \draw (4.5,1.8) node[above]{\footnotesize $n$};
    \draw (4.5,-1.8) node[below]{\footnotesize $n-1$};
  \end{tikzpicture}
\]
 We begin with canonical reduced words of type $D_n$.

\subsection{Canonical reduced words}
 
  For $1\leq i \leq n-1$, we define the words $s_{ij}$by:
\[ s_{ij} =
 \left\{
   \begin{array}{ll}
     ~[i, i-1, \hdots, j] & \text{ if } i \ge j, \\  ~[~] & \textrm{ if } i<j .
   \end{array}
 \right.
\]
  When $i=n$, we define
\[ s_{nj} =
 \left\{
   \begin{array}{ll}
     ~[n, n-2, \hdots, j] & \textrm{ if } j\leq n-2, \\
    ~[n]  & \textrm{ if } j=n,n-1, \\
     ~[~] & \textrm{ if } j>n.
   \end{array}
 \right.\]
We will often write $s_{ii}=s_i$.
The following lemma provides a canonical form we need.

\begin{lemma}[\cite{Bokut2001}, Lemma 5.2]\label{dcan}
Any element of the Coxeter group of type $D_n$ can be uniquely written in the reduced form
 \[
   s_{1i_1}s_{2i_2}\cdots s_{n-1i_{n-1}}s_{nj_1}s_{n-1j_2}s_{nj_3}s_{n-1j_4}\cdots s_{n-1+[\ell]_2\, j_\ell}
 \]
 where $1 \le i_k \leq k+1$ for $1 \le k \le n-1$, and   $1\leq j_1<j_2<\cdots<j_\ell\leq n-1$ for $\ell\geq0$, and $[\ell]_2=1$ when $\ell$ is odd, $[\ell]_2=0$ when $\ell$ is even.
\end{lemma}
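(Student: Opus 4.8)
The plan is to establish existence and uniqueness of the stated normal form by induction on $n$, exploiting the standard parabolic decomposition of $W(D_n)$ with respect to the parabolic subgroup $W(D_{n-1})$ generated by $s_1, \dots, s_{n-1}$ (here I use the convention from the Dynkin diagram in the excerpt, where $s_n$ is the ``fork'' node and omitting $s_n$ leaves a diagram of type $D_{n-1}$ on the nodes $1, \dots, n-1$). The key structural fact I would invoke is that every $w \in W(D_n)$ factors uniquely as $w = u \cdot v$ where $u \in W(D_{n-1})$ and $v$ is the minimal-length representative of the coset $W(D_{n-1})w$, with $\ell(w) = \ell(u) + \ell(v)$; moreover the set of such minimal coset representatives is explicitly describable. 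So the first step is to identify the minimal right-coset representatives of $W(D_{n-1})$ in $W(D_n)$ and show each is exactly one of the tails $s_{nj_1} s_{n-1 j_2} s_{n j_3} \cdots s_{n-1+[\ell]_2\, j_\ell}$ with $1 \le j_1 < j_2 < \cdots < j_\ell \le n-1$.

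First I would record the combinatorial model of $W(D_n)$ as the group of signed permutations of $\{1, \dots, n\}$ with an even number of sign changes, with $s_i = (i, i+1)$ for $i \le n-1$ and $s_n$ acting as the ``signed transposition'' of $n-1, n$. Using this, the cosets $W(D_{n-1}) \backslash W(D_n)$ are indexed by the possible images of the pair $(\pm(n))$—more precisely by which element goes to position $n$ and with which sign, subject to the even-sign-change constraint. One checks directly, by computing lengths in the signed-permutation length formula, that the displayed products of $s_{nj}$'s and $s_{n-1,j}$'s with strictly increasing subscripts are reduced, pairwise distinct, and exhaust this set of coset representatives; the alternation between $n$ and $n-1$ and the parity gadget $[\ell]_2$ are precisely what is forced by the even-sign-change condition. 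This is the combinatorial heart of the argument and I expect the bookkeeping here — verifying reducedness and the count of representatives, and matching them cleanly to the $j_1 < \cdots < j_\ell$ parametrization — to be the main obstacle; everything else is formal.

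Second, for the head $u \in W(D_{n-1})$, I would apply the inductive hypothesis: by induction $u$ has a unique reduced expression of the asserted shape for $D_{n-1}$, namely $s_{1 i_1} \cdots s_{n-2\, i_{n-2}}$ followed by the alternating $s_{n-1}/s_{n-2}$ tail. But here one must be careful: the ``$D_{n-1}$ normal form'' as literally stated has its special node being $n-1$, whereas in the $D_n$ expression we want a clean prefix $s_{1 i_1} \cdots s_{n-1\, i_{n-1}}$ followed by the alternating $n/(n-1)$ tail. So the induction should instead be set up on a slightly more flexible statement, peeling off one node at a time from the linear end: write $w = w' \cdot v_{n-1}$ with $w' \in W(\text{type } A_{n-1} \text{ or } D_{n-1})$ and $v_{n-1}$ a minimal coset representative of the form $s_{n-1\, i_{n-1}}$ (a single descending run $[n-1, n-2, \dots, i_{n-1}]$ with $1 \le i_{n-1} \le n$), iterate down through $D_4$ (or $A_3$), and then handle the fork with the first step above. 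Concretely: the subgroup generated by $s_1, \dots, s_{n-2}, s_n$ is again of type $D_{n-1}$ (relabel $n \mapsto n-1$), its minimal coset representatives in the group generated by $s_1, \dots, s_{n-1}, s_n$ are the single runs $s_{n-1\, i_{n-1}}$, and peeling these off in order $k = n-1, n-2, \dots$ produces the prefix $s_{1 i_1} s_{2 i_2} \cdots s_{n-1\, i_{n-1}}$ with $1 \le i_k \le k+1$ exactly as claimed, while the alternating tail is untouched.

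Finally I would assemble the pieces: existence follows by concatenating the prefix from the iterated peeling with the tail from the parabolic-$W(D_{n-1})$ decomposition, and additivity of lengths at each stage shows the concatenation is reduced. Uniqueness follows because at each stage the factorization $w = u \cdot v$ into (element of parabolic)$\times$(minimal coset representative) is unique, and the parametrizations of the $v$'s (by $i_k$, respectively by $j_1 < \cdots < j_\ell$) are injective — the latter because distinct increasing sequences give distinct signed permutations, as one sees from the explicit action computed in the first step. Thus the map from the data $(i_1, \dots, i_{n-1}; j_1 < \cdots < j_\ell)$ to $W(D_n)$ is a bijection onto the group with the stated reduced words, completing the proof. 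The only genuinely delicate point, as noted, is the explicit analysis of the fork node and the role of the parity $[\ell]_2$; the rest is a routine induction once the right inductive statement (peeling one node at a time) is chosen.
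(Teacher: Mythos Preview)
The paper does not prove this lemma at all; it is simply quoted from \cite{Bokut2001}, Lemma~5.2. So there is no ``paper's proof'' to match. Your strategy---iterated parabolic decomposition, identifying the suffixes as minimal coset representatives and the prefix as a normal form for the parabolic---is the standard route and is the right idea.

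That said, there is a concrete error in your setup that would derail the argument as written. In the Dynkin diagram used here, nodes $n-1$ and $n$ are \emph{both} attached to $n-2$; deleting $s_n$ leaves the linear chain $1\!-\!2\!-\!\cdots\!-\!(n-1)$, which is of type $A_{n-1}$, not $D_{n-1}$. Likewise $\langle s_1,\dots,s_{n-2},s_n\rangle$ is again $A_{n-1}$, not $D_{n-1}$ as you claim. Consequently your description of the cosets (``which element goes to position $n$ and with which sign'') is the wrong invariant: that parametrizes roughly $2n$ cosets, whereas the index $[W(D_n):\langle s_1,\dots,s_{n-1}\rangle] = (2^{n-1}n!)/n! = 2^{n-1}$, matching the Remark after the lemma. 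The correct picture is that $\langle s_1,\dots,s_{n-1}\rangle \cong S_n$ is the subgroup of unsigned permutations inside the even-signed permutation group, and its right cosets are indexed by even sign patterns, i.e.\ subsets of $\{1,\dots,n-1\}$---exactly the data $j_1<\cdots<j_\ell$ labeling the suffixes. Once you fix this, the rest of your outline goes through cleanly: the prefix $s_{1i_1}\cdots s_{n-1\,i_{n-1}}$ is the standard $A_{n-1}$ normal form (obtained by peeling off $A_1\subset A_2\subset\cdots\subset A_{n-1}$, each step contributing a single descending run $s_{k\,i_k}$ with $1\le i_k\le k+1$), and no $D_{n-1}$ induction is needed anywhere.
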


The left factor $s_{1i_1}s_{2i_2}\cdots s_{n-1i_{n-1}}$  will be called the \textit{prefix}, and similarly the right factor $s_{nj_1}s_{n-1j_2}s_{nj_3}s_{n-1j_4}\cdots s_{n-1+[\ell]_2\, j_\ell}$ will be called the \textit{suffix} of the reduced word.  For example,  in the case of $D_5$, the word $s_{21}s_3s_4s_{52}s_{43}s_5=[2,1,3,4,5,3,2,4,3,5]$ has prefix $s_{21}s_3s_4=[2,1,3,4]$ and suffix $s_{52}s_{43}s_5=[5,3,2,4,3,5]$.  Given a reduced word $\bw$ in canonical form, we will denote by $\pw$ the prefix of $\bw$ and by $\sw$ the suffix, and write $\bw = \pw \sw$. Generally, a word of the form $s_{nj_1}s_{n-1j_2}s_{nj_3}s_{n-1j_4}\cdots s_{n-1+[\ell]_2\, j_\ell}$ with $1\leq j_1<j_2<\cdots<j_\ell\leq n-1$ for $\ell \ge 0$ will be called a {\em suffix}. 

\begin{remark}
Notice that choosing a suffix is equivalent to choosing a (possibly empty) subset of $\{1, 2, \hdots, n-1\}$.  There are $2^{n-1}$ ways to do this.  Since there are $n!$ prefixes,  we have $n!\cdot 2^{n-1}$ elements in the canonical reduced form. We recall that there are the same number of elements in the type-$D_n$ Coxeter group.
\end{remark}

\begin{lemma} \label{every suffix}
  Every  suffix is a homogeneous word, and so represents a fully commutative element.
\end{lemma}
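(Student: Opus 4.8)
The plan is to verify the homogeneity condition~\eqref{homog} directly on an arbitrary suffix
\[
\sw = s_{nj_1}s_{n-1j_2}s_{nj_3}s_{n-1j_4}\cdots s_{n-1+[\ell]_2\, j_\ell}, \qquad 1\le j_1 < j_2 < \cdots < j_\ell \le n-1.
\]
First I would write out explicitly the letter sequence of $\sw$: the $m$-th block contributes either $[n, n-2, n-3, \dots, j_m]$ (when the block starts with $s_n$, i.e.\ $m$ odd) or $[n-1, n-2, \dots, j_m]$ (when it starts with $s_{n-1}$, i.e.\ $m$ even), with the convention that when $j_m\in\{n-1,n\}$ the block degenerates to a single letter. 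The key structural observation is that the letters $n$ and $n-1$ are \emph{not} neighbors in the Dynkin diagram of type $D_n$ (both are only joined to $n-2$), so an occurrence of $n$ and an occurrence of $n-1$ never create an obstruction by themselves; the letter strictly between two equal letters that we must exhibit neighbors for is $n-2$.

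The core of the argument is a case analysis on which letter $w_r = w_s$ is repeated. There are three situations. (i) If $w_r = w_s = k$ for some $k \le n-2$: since within a single block the letters strictly decrease, the two occurrences lie in different blocks, say blocks $m$ and $m'$ with $m < m'$; but consecutive blocks alternate between starting at $n$ and starting at $n-1$, and each block containing the letter $k \le n-2$ passes through $n-2$ on the way down (indeed through $n-2, n-3, \dots, k$). Between the two occurrences of $k$ there is then an occurrence of $k+1$ from the tail of block $m$ and an occurrence of $k+1$ from the head of block $m'$ (when $k = n-2$ one instead uses the letters $n$ and $n-1$ that head the relevant blocks, both neighbors of $n-2$); in every case one finds an occurrence of $k-1$ \dots wait — more carefully, $k$ has neighbors $k-1$ and $k+1$ in type $D_n$ when $2 \le k \le n-3$, and one checks both appear between the two copies of $k$ using the descending runs of adjacent blocks. (ii) If $w_r = w_s = n$: the two occurrences of $n$ head two odd-indexed blocks $m < m'$, so there is an even-indexed block strictly between them starting with $n-1$ followed (unless it degenerates) by $n-2$; since $j_{m} < j_{m+1}$, block $m$ actually descends past $n-2$, so between the two $n$'s we see $n-2$ (twice, from the tails of blocks $m$ and, symmetrically, from block $m'$'s head run) — and $n-2$ is a neighbor of $n$. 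We need a \emph{second} distinct letter that is a neighbor of $n$; but $n-2$ is the only neighbor of $n$. Here one uses that there are two \emph{distinct positions} $t<u$ with $w_t = w_u = n-2$ between the two $n$'s: one from the descending run of block $m$ and one from block $m'$, which is possible precisely because $j_m, j_{m'} \le n-2$ forces each of these blocks to contain the letter $n-2$. (iii) The case $w_r = w_s = n-1$ is symmetric, using even-indexed blocks and again two occurrences of $n-2$.

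The main obstacle, and the step deserving the most care, is case (ii)/(iii): one must confirm that two \emph{genuinely distinct} intermediate positions both carrying the unique neighbor $n-2$ always exist, which reduces to checking that whenever $s_n$ (resp.\ $s_{n-1}$) appears twice in the suffix, each of the two blocks it heads — as well as the block in between — reaches down at least to $n-2$. This follows from the strict inequalities $j_1 < j_2 < \cdots < j_\ell \le n-1$ together with the alternation of block types, but it requires separating out the small/degenerate cases $j_m \in \{n-1, n\}$, where a block is a single letter; in those degenerate cases one checks that the relevant repeat of $n$ or $n-1$ simply cannot occur, so condition~\eqref{homog} is vacuous for that letter. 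Once all three cases are dispatched, every suffix satisfies~\eqref{homog}, hence is a homogeneous word, and by Lemma~\ref{lem-equiv}(1) it is a reduced expression for a fully commutative element.
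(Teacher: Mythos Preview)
Your plan is correct and matches the paper's own argument: both verify condition~\eqref{homog} by a case split on the value of the repeated letter, using the descending-block structure of a suffix together with the strict inequalities $j_1<\cdots<j_\ell\le n-1$ to locate two neighboring letters between any repeat. The paper's write-up is much terser---it simply names the witnessing subword in each case (e.g.\ $[k,k-1,\dots,k+1,k]$ for $2\le k\le n-2$ and $[n,n-2,\dots,n-1,n-2,\dots,n]$ for repeats of $n$)---and it disposes of the letter $1$ in one line by observing it cannot repeat (only $j_1$ can equal $1$), a point you should make explicit; also, in your case~(ii) the second copy of $n-2$ you need sits in the intervening even block, not in ``block $m'$'s head run.''
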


\begin{proof}
A suffix $\bw'$ cannot have $[1, \dots , 1]$ as a subword. If a suffix $\bw'$ has a subword $[k, \dots , k]$ for $2 \le k \le n-2$, then $\bw'$ must have $[k, k-1, \dots , k+1, k]$ as a subword from the conditions on a suffix, and $\bw'$ is a homogeneous word from the definition \eqref{homog}. If $\bw'$ has $[n-1, \dots , n-1]$ as a subword, then $\bw'$ must have $[n-1, n-2, \dots , n, n-2, \dots , n-1]$. Similarly, if $\bw'$ has $[n, \dots , n]$, then $\bw'$ must have $[n, n-2, \dots , n-1, n-2, \dots , n]$. In both cases, $\bw'$ is a homogeneous word.
\end{proof}

Let $\mathcal W_n$ be the set of canonical reduced words of type $D_n$ given in Lemma \ref{dcan}. Each homogeneous word in $\mathcal W_n$ uniquely represents a fully commutative element of the Coxeter group of type $D_n$ and also a homogeneous component of a weight graph by Lemma \ref{lem-equiv}. The homogeneous words in $\mathcal {W}_n$ will be grouped based on their suffixes:

\begin{dfn}
 A {\em collection} $\coll{n}{\sw} \subset \mathcal {W}_n$ labeled by a suffix $\sw$ is defined to  be the set of homogeneous words in $\mathcal {W}_n$ whose  suffix is $\sw$. 
A collection will be identified with the set of corresponding fully commutative elements in the Coxeter group of type $D_n$.
\end{dfn}

Some of the collections have the same number of elements as we will see in the following lemma and proposition.

\begin{lemma}\label{type2}
 For a fixed $k$, $0\leq k \leq n-3$, any collection labeled by a suffix of the form 
\begin{equation} \label{ff} s_{n\, k+1}s_{n-1j_2}s_{nj_3}s_{n-1j_4}\cdots s_{n-1+[\ell]_2\, j_\ell}\quad (\ell \ge 2) \end{equation} has the same set of prefixes. In particular, these collections have the same number of elements.
\end{lemma}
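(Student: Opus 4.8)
The plan is to show that the set of prefixes occurring in a collection $\coll{n}{\sw}$ depends only on $k$ (the value $j_1 - 1$, where $j_1 = k+1$), not on the rest of the suffix. Concretely, fix $k$ with $0 \le k \le n-3$ and let $\sw = s_{n\,k+1}s_{n-1 j_2}\cdots$ and $\tilde{\bw}' = s_{n\,k+1}s_{n-1 j_2'}\cdots$ be two suffixes of the form \eqref{ff}. I want to prove that a prefix $\pw$ satisfies: $\pw\sw$ is homogeneous $\iff$ $\pw\tilde{\bw}'$ is homogeneous. Since both suffixes begin with the same letter-block $s_{n\,k+1} = [n, n-2, n-3, \dots, k+1]$, and since homogeneity of a word is a condition about repeated letters having two neighbors strictly between their occurrences (condition \eqref{homog}), the idea is that whether a violation of \eqref{homog} occurs in $\pw\sw$ is already determined by $\pw$ together with the initial segment $s_{n\,k+1}$ of the suffix.

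The key steps, in order: (1) By Lemma \ref{every suffix} every suffix is homogeneous on its own, so any failure of \eqref{homog} in $\pw\sw$ must involve at least one letter from the prefix $\pw$. (2) Analyze which letters of the suffix $\sw$ can possibly "interact" with a repeated letter from the prefix, i.e. can serve as one of the two required neighbors $w_t, w_u$ between two equal letters $w_r = w_s$, or can be the matching occurrence $w_s$ of a letter $w_r$ from the prefix. Here I expect to use the explicit form of the canonical word from Lemma \ref{dcan}: the prefix is $s_{1 i_1}\cdots s_{n-1\,i_{n-1}}$, and the letters appearing in $\sw$ past the first block $s_{n\,k+1}$ are all $\le \max(j_2, j_3, \dots) $ but more importantly they are interleaved in the special pattern $n, n-1, n, n-1, \dots$ with descending tails. (3) Show that any repeated-letter configuration in $\pw\sw$ that could violate \eqref{homog} is "resolved" (has its two neighbors) already within $\pw \cdot s_{n\,k+1}$, OR is never resolvable regardless of the tail — in either case the truth value does not change when we swap the tail $s_{n-1 j_2}\cdots$ for $s_{n-1 j_2'}\cdots$. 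The cleanest way to organize this is: a letter $m$ with $1 \le m \le n-1$ appears in the prefix at most... well, several times, but the relevant observation is that for $\pw\sw$ to be homogeneous the prefix part already has strong restrictions (it must be a fully commutative prefix), and then the only new repeats created by concatenation involve a prefix letter $m \le n$ matched against the first occurrence of $m$ inside $s_{n\,k+1}$ (for $m \le k+1$ or $m = n, n-1$), whose two intervening neighbors, if they exist at all, lie inside $s_{n\,k+1}$ itself.

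The main obstacle I anticipate is the bookkeeping in step (3): carefully ruling out the possibility that a neighbor needed to rescue a prefix repeat sits in the tail $s_{n-1 j_2} s_{n j_3}\cdots$ rather than in the first block. This requires knowing that the letters in the tail are, in a suitable sense, "too far right and too small" to be squeezed between an earlier equal pair, which in turn uses the strictly increasing condition $k+1 = j_1 < j_2 < \cdots < j_\ell \le n-1$ on the suffix indices together with the descending structure of each $s_{\cdot j}$ block. Once the letter-level interaction is pinned down, the conclusion "same set of prefixes, hence same cardinality" is immediate, and the final sentence of the lemma follows since a collection is by definition determined by its set of (homogeneous) prefixes once the suffix is fixed.
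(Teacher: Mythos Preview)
Your approach is essentially the paper's: reduce to showing that the prefix set depends only on an initial portion of the suffix, using that the suffix itself is homogeneous (Lemma~\ref{every suffix}) and that the increasing condition $j_1<j_2<\cdots$ forces every letter of the tail to already appear earlier. The one refinement the paper makes, which dissolves the bookkeeping you worry about in step~(3), is to take the reference initial segment to be $\bw_1:=s_{n\,k+1}s_{n-1}$ rather than just $s_{n\,k+1}$. Since $\ell\ge 2$, the block $s_{n-1\,j_2}$ begins with $n-1$, so $\bw_1$ is a genuine left factor of \emph{every} suffix of the form~\eqref{ff}; and $\bw_1$ contains every letter in $\{k+1,\dots,n-2,n-1,n\}$, hence every letter that can occur anywhere in such a suffix. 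With this choice the two inclusions become one-liners: truncating the suffix down to $\bw_1$ cannot destroy homogeneity, and conversely if $\pw\bw_1$ is homogeneous then for any letter $r$ shared by $\pw$ and $\sw$ the first suffix occurrence of $r$ already lies in the left factor $\bw_1$, so the two neighbors between them are already supplied. Your version with $s_{n\,k+1}$ alone misses the letter $n-1$ (note your parenthetical ``for $m\le k+1$ or $m=n,n-1$'' is off: the letters in $s_{n\,k+1}$ are exactly $n$ and $k+1,\dots,n-2$), forcing a separate case for prefixes ending in $n-1$; adjoining the single extra letter $n-1$ to the reference block eliminates that case entirely.
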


\begin{proof}

Let $\bw'$ be a suffix of the form \eqref{ff}. Then $\bw'$ has the suffix $\bw_1:=s_{n\, k+1} s_{n-1}$ as a subword. Since removing letters from the end of a word will not affect its homogeneity, it is clear that any prefix appearing in the collection $\coll{n}{\bw'}$ also appears in $\coll{n}{\bw_1}$.  We need to show, then, the opposite inclusion. 

  Suppose now that $\pw$ is a prefix of a homogeneous word appearing in the collection labeled by $\bw_1$. Since the prefix and suffix of a homogeneous word are individually homogeneous words, we only assume  that there is some letter $r$ which appears in both $\pw$ and $\sw$. From the condition $$1\leq j_1<j_2<\cdots<j_\ell\leq n-1$$ on the suffix $\sw$, we see that the letter $r$ also appears in $\bw_1$. The homogeneity of $\bw_0\bw_1$ requires that two neighbors of $r$ appear between the instance of $r$ in $\pw$ and the  instance of $r$ in $\bw_1$, and these two neighbors of $r$ also satisfy the homogeneity condition for $\pw \sw$ since $\bw_1$ is a left factor of $\sw$. This proves that $\pw$ is a prefix of $\sw$ for any suffix $\sw$ of the form \eqref{ff}.
\end{proof}

\begin{prop}\label{type size}
  For $1 \leq k\le n-3$, the collection labeled by the suffix $s_{nk}$ has the same number of elements as any of the collections labeled by the suffix of the form  \[s_{n\, k+1}s_{n-1j_2}s_{nj_3}s_{n-1j_4}\cdots s_{n-1+[\ell]_2\, j_\ell}\quad (\ell \ge 2). \]
\end{prop}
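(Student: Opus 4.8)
The plan is to exhibit an explicit bijection between the set of prefixes occurring in the collection $\coll{n}{s_{nk}}$ and the set of prefixes occurring in a collection labeled by a suffix of the form \eqref{ff}. By Lemma~\ref{type2}, every suffix of the form \eqref{ff} with leading factor $s_{n\,k+1}$ gives rise to the \emph{same} set of prefixes, namely the prefixes appearing in $\coll{n}{\bw_1}$ where $\bw_1 = s_{n\,k+1}s_{n-1}$; so it suffices to compare the prefix sets of $\coll{n}{s_{nk}}$ and $\coll{n}{\bw_1}$. Since a homogeneous word is determined by a (homogeneous) prefix together with its suffix, matching the prefix sets immediately matches the cardinalities of the two collections.

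First I would analyze which prefixes $\pw$ are compatible with the suffix $s_{nk} = [n, n-2, n-3, \dots, k]$. A prefix $\pw$ fails to extend homogeneously exactly when some letter $r$ appears in both $\pw$ and $s_{nk}$ without two neighbors of $r$ separating the two occurrences; here the letters of $s_{nk}$ are $n$ together with $n-2, n-3, \dots, k$. Next I would do the same for $\bw_1 = [n, n-2, n-3, \dots, k+1, n-1]$: its letters are $n$, $n-1$, and $n-2, \dots, k+1$. The key observation is that passing from $s_{nk}$ to $\bw_1$ trades the final letter $k$ for the letters $k+1$ (already present, now with higher multiplicity within the suffix) and $n-1$ — and one checks, using the canonical form of the prefix from Lemma~\ref{dcan} (the prefix is $s_{1i_1}\cdots s_{n-1\,i_{n-1}}$ with $1 \le i_m \le m+1$), that the homogeneity constraint imposed on $\pw$ by the letter $k$ in $s_{nk}$ is equivalent to the constraint imposed jointly by the letters $k+1$ and $n-1$ in $\bw_1$. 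Concretely, a prefix contains $k$ iff the factor $s_{n-1\,i_{n-1}}$ (or an earlier factor) reaches down to $k$, and one traces through exactly when the required pair of neighbors $k-1, k+1$ (resp.\ the neighbors forced by $k+1$ and by $n-1$) can be found; the combinatorial bookkeeping is symmetric on the two sides.

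Alternatively — and this may be cleaner to write — I would construct a direct map on words: given a homogeneous word $\pw\, s_{nk} \in \coll{n}{s_{nk}}$, produce a homogeneous word with suffix $\bw_1$ having the same prefix $\pw$, and conversely. The forward direction requires checking that $\pw\, \bw_1$ is still homogeneous whenever $\pw\, s_{nk}$ is; the reverse direction is the content already extracted from Lemma~\ref{type2}'s proof technique (removing the tail $n-1$ and relabeling preserves homogeneity of the prefix). Since both maps fix the prefix, they are mutually inverse on prefix sets, giving the equality of cardinalities.

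\textbf{Main obstacle.} The delicate point is verifying that the homogeneity obstruction contributed by the single letter $k$ at the end of $s_{nk}$ matches \emph{exactly} — no more, no less — the obstruction contributed by the pair of letters $k+1$ and $n-1$ at the end of $\bw_1$. This requires a careful case analysis of how low the prefix factors $s_{m\,i_m}$ descend (in particular whether a prefix occurrence of $k$, of $k+1$, or of $n-1$ is present, and where its neighbors sit), and one must be attentive to the boundary behavior near the fork of the $D_n$ diagram, i.e.\ the interaction of $n-1$ and $n$ with $n-2$. Everything else — the reduction to $\bw_1$ via Lemma~\ref{type2}, and the prefix-determines-word principle — is routine.
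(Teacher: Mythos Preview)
Your proposal rests on the claim that the two collections share the \emph{same} set of prefixes, so that the bijection can simply fix $\pw$ and swap the suffix. This is false, and it is exactly the point where the argument breaks. Take $n=5$, $k=2$, so $s_{nk}=[5,3,2]$ and $\bw_1=s_{n\,k+1}s_{n-1}=[5,3,4]$. The prefix $\pw=[1,2,3,4]$ gives the homogeneous word $[1,2,3,4,5,3,2]\in\coll{5}{[5,3,2]}$, but $[1,2,3,4,5,3,4]$ is \emph{not} homogeneous: between the two occurrences of $4$ only a single neighbor (namely $3$) appears. Conversely, the prefix $\pw=[1,2]$ gives $[1,2,5,3,4]\in\coll{5}{[5,3,4]}$, but $[1,2,5,3,2]$ fails homogeneity for the repeated letter $2$. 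So neither prefix set contains the other, and the ``obstruction from $k$'' does not coincide with the ``obstruction from $k+1$ and $n-1$'' as you hoped; your main obstacle is in fact an actual obstruction.

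What the paper does instead is build a bijection $\sigma:\coll{n}{s_{nk}}\to\coll{n}{\bw_1}$ that genuinely \emph{alters the prefix} in the problematic cases. If $\pw$ ends in a letter $r<k$, one can indeed just swap suffixes. But if $\pw$ ends in $n-1$ (the only other possibility), one locates the maximal ascending tail $[m,m+1,\dots,n-1]$ of $\pw$ with $m\ge k$ and replaces it by the descending segment $s_{mk}=[m,m-1,\dots,k]$, in addition to swapping the suffix. The inverse $\tau$ reverses this: when $\pw$ ends in $k$, the final segment $s_{mk}$ is replaced by the ascending run $[m,m+1,\dots,n-1]$. The substantive work is then checking that these prefix surgeries preserve homogeneity, which is a short case analysis. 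Your reduction to $\bw_1$ via Lemma~\ref{type2} is correct and is also the paper's first step; it is only the ``fix the prefix'' strategy that must be abandoned.
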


\begin{proof}
Let $\bw_1= s_{n \, k+1}s_{n-1}$ and $\bw_2=s_{nk}$. By Lemma \ref{type2}, it is enough to establish a bijection between the collections $\coll{n}{\bw_1}$ and $\coll{n}{\bw_2}$.
We define a map $\sigma: \coll{n}{\bw_2} \to \coll{n}{\bw_1}$ as follows.
  Suppose that $ \pw$ is the prefix of the word $\bw=\pw \bw_2=\pw [n, n-2, \dots , k] \in \coll{n}{\bw_2}$, and let $r$ be the last letter of $\pw$. Then by the condition of homogeneity, we must have $r<k$ or $r=n-1$.
If $r<k$, we define $\sigma(\bw)=\pw \bw_1$, i.e. the map $\sigma$ will simply replace the suffix $\bw_2=[n, n-2, \dots, k+1, k]$ with the suffix $\bw_1=[n, n-2, \dots, k+1, n-1]$.  To see that this image is actually in $\coll{n}{\bw_1}$, we need to check that changing the last letter of the suffix from $k$ to $n-1$ does not violate homogeneity.
   In turn, we need only to consider the case when $\sigma(\bw)$ has $[n-1, n-2, \hdots, i_{n-1}, n, n-2, \hdots, k+1, n-1]$ as a right factor. Clearly the neighbor $n-2$ appears twice between the two occurrences of $n-1$ and $\sigma(\bw) \in \coll{n}{\bw_1}$ in this case.

  If $r=n-1$, we take $m\geq k$ to be the smallest letter such that the string $[m, m+1, \hdots, n-1]$ is a right factor of $\pw$.  Then we have $\bw=s_{1i_1}\cdots s_{m-1i_{m-1}} s_ms_{m+1} \cdots s_{n-1} \bw_2$,  and we define
   \[
  \sigma(\bw) = s_{1i_1}\cdots s_{m-1i_{m-1}}s_{mk}\bw_1.
  \]
In other words, the map $\sigma$ replaces the factor $s_ms_{m+1} \cdots s_{n-1}=[m,m+1,\hdots, n-1]$ with the factor $s_{mk} = [m, m-1, \hdots, k]$ in addition to changing the suffix from $\bw_2$ to $\bw_1$.  
  
  We show now that the image
$\sigma(\bw) = s_{1i_1}\cdots s_{m-1i_{m-1}}s_{mk}\bw_1$
   is still a homogeneous word.  
  It is not hard to check that the right factor $s_{mk}\bw_1$ is homogeneous, but suppose that some letter $t$ appears in both the segment $s_{mk}$ and in some segment $s_{ji_j}$, $j \le m-1$.  Then we have $k \le t \le j \le m-1 \le n-2$. Since $t<m$, the letter $t$ does not appear in the ascending string $[m,\hdots, n-1]$ in the word $\bw$, but since $k \leq t \le n-2$, it does appear in the suffix, $[n,\hdots, t+1, t, \hdots, k]$, along with one neighbor $t+1$.  Since $\bw$ is homogeneous, there must have been another neighbor in the prefix $\pw$ which is not touched by the action of $\sigma$. Thus $\sigma(\bw)$ is also a homogeneous word, and we have shown that $\sigma(\coll{n}{\bw_2})\subset \coll{n}{\bw_1}$.  
  
  To summarize, we have
  \[
    \sigma(\bw)=
    \left\{\begin{array}{rl}
      s_{1i_1}\cdots s_{m-1i_{m-1}}s_{mk}\bw_1& \textrm{ if } \pw \textrm{ ends with }
      [m, m+1, \hdots, n-1], \\~
      \pw\bw_1 & \textrm{ otherwise}, 
    \end{array}\right.
  \]
  where $m\geq k$. Note that $\bw_0$ can not end in $n-1$ in the second case.
    For example, when $n=5$ and $k=2$, we have
  \begin{center}
  \begin{tabular}{|c|c|}
  \hline
  $\bw \in \coll{5}{[5,3,2]}$ & $\sigma(\bw) \in \coll{5}{[5,3,4]}$\\ \hline\hline
  $[3,2,1,5,3,2]$ & $[3,2,1,5,3,4]$\\ \hline
  $[4,3,2,1,5,3,2]$ & $[4,3,2,1,5,3,4]$\\ \hline
  $[1,2,3,4,5,3,2]$ & $[1,2,5,3,4]$\\ \hline
  $[2,1,4,5,3,2]$ & $[2,1,4,3,2,5,3,4]$\\ \hline
  \end{tabular}
  \end{center}

  Next, we define a map that goes in the other direction, $\tau:\coll{n}{\bw_1} \to  \coll{n}{\bw_2}$.  Suppose that $\bw = \pw\bw_1=\pw[n, n-2, \hdots, k+1, n-1] \in \coll{n}{\bw_1}$,  
and let $r$ be the last letter of $\pw$. Then by the condition of homogeneity, we must have $1\le r \le k$.

 If $r<k$, then we define $\tau(\pw \bw_1)=\pw \bw_2$, i.e. $\tau$ simply replaces the suffix $[n,\hdots, k+1, n-1]$ with the suffix $[n,\hdots, k+1, k]$.  To see that this results in a homogeneous word, we need to check only that the letter $k$ at the end of the suffix does not violate the homogeneity condition. Assume that another $k$ appears in $\pw$, and consider the last non-empty segment $s_{mr}$  of $\pw$.  Then we have $r <k \le m$.
Since there are two neighbors ($k+1$ and $k-1$) between the two occurrences of $k$, homogeneity is preserved.

  If $r=k$, then the final non-empty segment of the prefix is $s_{mk}$ for some $m$ with $k \leq m \leq n-1$.  We define 
\[\tau(\bw) =\tau( s_{1i_1}\cdots s_{m-1i_{m-1}}s_{mk}\bw_1)= s_{1i_1}\cdots s_{m-1i_{m-1}}s_{m}s_{m+1} \cdots s_{n-1} \bw_2. \]
That is,  $\tau$ replaces $s_{mk}$ with the ascending string $[m, m+1, \cdots, n-1]$ and 
the suffix $\bw_1$ with $\bw_2$.  It remains to see that $\tau(\bw)$ is in fact a homogeneous word. Notice that the left factor  $s_{1i_1}\cdots s_{m-1i_{m-1}}$ and the ascending string $[m,\hdots, n-1]$ have no letters in common, so there is nothing here to check.  Also the right factor $[m,\hdots, n-1, n, n-2, \hdots, k]$ is easily checked to be homogeneous.
  Suppose that some letter $r$ occurs in the left factor $s_{1i_1}\cdots s_{m-1i_{m-1}}$, and also in the suffix $[n,n-2, \cdots, k]$.  Since $r$ does not appear in the ascending string $[m, \hdots, n-1]$ but does appear in the suffix $[n,n-2, \hdots, k]$, it follows that $k\leq r < m$, so the letter $r$ appears in the segment $s_{mk}$ in $\pw$.  Since the word $\pw$ is homogeneous, it must be the case that two neighbors of $r$ appear between these two instances of $r$ in $\pw$.  One of them may be in the segment $s_{mk}$ which is replaced by the map $\tau$, but at least one of them must be in the left factor which remains fixed under $\tau$.  Clearly another neighbor occurs in the suffix, so the condition for homogeneity is satisfied.

  To summarize, we have, for $\bw=\pw\bw_1\in\coll{n}{\bw_1}$
   \[
    \tau(\bw)=
    \left\{\begin{array}{rl}
      s_{1i_1}\cdots s_{m-1i_{m-1}}s_{m}s_{m+1}\cdots s_{n-1} \bw_2 & \textrm{ if } \pw \textrm{ ends with }
      s_{mk}, \\~
      \pw \bw_2 & \textrm{ otherwise},
    \end{array}\right.
  \]
  where $k \leq m \leq n-1$. Note that $\bw_0$ can not end in $k$ in the second case. For example, when $n=5$ and $k=2$, we have

  \begin{center}
  \begin{tabular}{|c|c|}
  \hline
  $\bw \in \coll{5}{[5,3,4]}$ & $\tau(\bw) \in \coll{5}{[5,3,2]}$\\ \hline\hline
  $[2,1,5,3,4]$ & $[2,1,5,3,2]$\\ \hline
  $[2,5,3,4]$ & $[2,3,4,5,3,2]$\\ \hline
  $[1,4,3,2,5,3,4]$ & $[1,4,5,3,2]$\\ \hline
  \end{tabular}
  \end{center}

Now one can check that $\tau$ is both a left and a right inverse of $\sigma$, so the bijection is established.

\end{proof}

\subsection{Packets} \label{sec-pac}

The results in the previous subsection show that some collections have the same cardinality. It is natural, then, to group them together, which lead us to the following definition.

\begin{dfn}
For $0 \le k \le n$, we define the {\em $(n,k)$-packet} of collections:
\begin{itemize}
\item The $(n,0)$-packet is the set of collections labeled by suffixes of the form \[s_{n1}s_{n-1j_2}s_{nj_3}s_{n-1j_4}\cdots s_{n-1+[\ell]_2\, j_\ell}\quad (\ell \ge 2).\]
\item The $(n,k)$-packet, $1\le k \le n-3$, is the set of collections labeled by $s_{nk}$ or suffixes of the form $s_{n\, k+1}s_{n-1j_2}s_{nj_3}s_{n-1j_4}\cdots s_{n-1+[\ell]_2\, j_\ell}\quad (\ell \ge 2)$.
\item The $(n,n-2)$-packet contains only the collection labeled by $s_{n\, n-2}=[n,n-2]$.
\item The $(n,n-1)$-packet contains only the collection labeled by $s_n=[n]$.
\item The $(n,n)$-packet contains only the collection labeled by the empty suffix $[~]$. 
\end{itemize}
We will denote the $(n,k)$-packet by $\PP(n,k)$. 
As an example, Table~\ref{D4 packets} shows all of the packets in the case of $D_4$.
\end{dfn}

We record an important property of a packet:
\begin{cor} \label{num}
The collections in a packet have the same number of elements.
\end{cor}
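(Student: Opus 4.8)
The plan is to derive Corollary \ref{num} directly from the results already established in the previous subsection, treating each of the five cases in the definition of $\PP(n,k)$ separately. For a singleton packet --- namely $\PP(n,n-2)$, $\PP(n,n-1)$, and $\PP(n,n)$ --- there is nothing to prove, since a packet containing a single collection trivially has all its collections of the same size. So the real content concerns the packets $\PP(n,0)$ and $\PP(n,k)$ for $1 \le k \le n-3$.

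For $\PP(n,0)$, which consists of the collections labeled by suffixes of the form $s_{n1}s_{n-1j_2}s_{nj_3}\cdots s_{n-1+[\ell]_2\, j_\ell}$ with $\ell \ge 2$, I would invoke Lemma \ref{type2} with $k=0$: it asserts precisely that all such collections share the same set of prefixes, hence the same number of elements. One small point to note is that the definition of the $(n,0)$-packet only includes suffixes with $\ell \ge 2$ (there is no separate collection $s_{n0}$, which would not be a legitimate suffix), so Lemma \ref{type2} applies verbatim and the case is complete.

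For $\PP(n,k)$ with $1 \le k \le n-3$, the packet consists of the collection labeled by $s_{nk}$ together with all collections labeled by suffixes of the form $s_{n\, k+1}s_{n-1j_2}s_{nj_3}\cdots s_{n-1+[\ell]_2\, j_\ell}$ with $\ell \ge 2$. Here I would first apply Lemma \ref{type2} to conclude that all the collections of the latter form have the same cardinality, and then apply Proposition \ref{type size}, which establishes a bijection between the collection labeled by $s_{nk}$ and any one of those collections (concretely, between $\coll{n}{s_{nk}}$ and $\coll{n}{s_{n\, k+1}s_{n-1}}$). Chaining these together shows every collection in $\PP(n,k)$ has the same number of elements.

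I do not anticipate a genuine obstacle here: the corollary is a bookkeeping consequence of Lemma \ref{type2} and Proposition \ref{type size}, and the only thing requiring care is making sure the case division in the definition of $\PP(n,k)$ is matched correctly against the hypotheses of those two results (in particular that $k$ ranges over $1 \le k \le n-3$ in Proposition \ref{type size}, and that the singleton packets are handled trivially). The write-up will therefore be short, essentially one sentence per case.
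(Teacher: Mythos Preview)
Your proposal is correct and matches the paper's own proof exactly: the paper simply says the assertion follows from Lemma~\ref{type2} and Proposition~\ref{type size}, and your write-up just unpacks that one-line citation into the explicit case analysis over the definition of $\PP(n,k)$.
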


\begin{proof}
The assertion follows from Lemma \ref{type2} and Proposition \ref{type size}.
\end{proof}

\begin{table}
 \pgfdeclarelayer{background}
\pgfdeclarelayer{foreground}
\pgfsetlayers{background,main,foreground}
\tikzstyle{coll} = [draw, very thick, fill=white,
    rectangle, rounded corners, inner sep=18pt, inner ysep=15pt, minimum height=2in]
\tikzstyle{fancytitle} =[fill=black, text=white]

\[
\begin{tikzpicture}[font=\footnotesize]

\node[coll,minimum height=.6cm, inner sep = 2mm] (C4213){
\begin{minipage}{.2\textwidth}\begin{center}
\underline{$\mathbf c^{4}_{[4,2,1,3]}$}\\[2mm]
 $[4,2,1,3]$
 \end{center}
 \end{minipage}
};

\node[coll, minimum height=.6cm, right of=C4213, xshift=3cm, inner sep = 2mm] (C42132){
\begin{minipage}{.2\textwidth}\begin{center}
\underline{$\mathbf c^{4}_{[4,2,1,3,2]}$}\\[2mm]
 $[4,2,1,3,2]$
 \end{center}
 \end{minipage}
};

\node[coll, minimum height=.6cm, right of=C42132, xshift=3cm, inner sep = 2mm] (C421324){
\begin{minipage}{.2\textwidth}\begin{center}
\underline{$\mathbf c^{4}_{[4,2,1,3,2,4]}$}\\[2mm]
 $[4,2,1,3,2,4]$
 \end{center}
 \end{minipage}
};

\node[coll, below of=C4213, yshift=-2.2cm, xshift=.85cm, minimum height=1cm, inner sep = 3mm] (C421){
\begin{minipage}{.3\textwidth}\begin{center}
\underline{$\mathbf c^{4}_{[4,2,1]}$}\\[2mm]
\begin{multicols}{2}
   $[4,2,1]$\\[2pt]
   $[3,4,2,1]$\\[2pt] \columnbreak
   $[2,3,4,2,1]$\\[2pt]
   $[1,2,3,4,2,1]$\\[2pt]
   \end{multicols}
 \end{center}
 \end{minipage}
};

\node[coll, right of=C421, xshift=.35\textwidth, minimum height=1cm, inner sep=3mm] (C423){
\begin{minipage}{.3\textwidth}\begin{center}
\underline{$\mathbf c^{4}_{[4,2,3]}$}\\[2mm]
\begin{multicols}{2}
   $[3,2,1,4,2,3]$\\[2pt]
   $[2,1,4,2,3]$\\[2pt]
   $[1,4,2,3]$\\[2pt]
   $[4,2,3]$\\[2pt]
 \end{multicols}
 \end{center}
 \end{minipage}
};

\node[coll, below of=C42132, yshift=-6cm, 
minimum height=1cm, inner sep=3mm] (C42){
\begin{minipage}{.65\textwidth}\begin{center}
\underline{$\mathbf c^{4}_{[4,2]}$}\\[2mm]
\begin{multicols}{3}
   $[4,2]$\\[2pt]
   $[3,2,1,4,2]$\\[2pt]
   $[2,1,4,2]$\\[2pt]
   $[1,4,2]$\\[2pt]
   $[2,1,3,4,2]$\\[2pt]
   $[1,3,4,2]$\\[2pt]
   $[3,4,2]$\\[2pt]
   $[2,3,4,2]$\\[2pt]
   $[1,2,3,4,2]$\\[2pt]
   \end{multicols}
 \end{center}
 \end{minipage}
};

\node[coll, below of=C42, yshift=-3.3cm, 
minimum height=1cm, inner sep=3mm] (C4){
\begin{minipage}{.75\textwidth}\begin{center}
\underline{$\mathbf c^{4}_{[4]}$}\\[2mm]
\begin{multicols}{4}
   $[1,2,3,4]$\\[2pt]
   $[1,2,4]$\\[2pt]
   $[1,3,2,4]$\\[2pt]
   $[1,3,4]$\\[2pt]
   $[1,4]$\\[2pt]
   $[2,1,3,2,4]$\\[2pt]
   $[2,1,3,4]$\\[2pt]
   $[2,1,4]$\\[2pt]
   $[2,3,4]$\\[2pt]
   $[2,4]$\\[2pt]
   $[3,2,1,4]$\\[2pt]
   $[3,2,4]$\\[2pt]
   $[3,4]$\\[2pt]
   $[4]$\\[2pt]
   \end{multicols}
 \end{center}
 \end{minipage}
};

\node[coll, below of=C4, yshift=-3.55cm, 
minimum height=1cm, inner sep=3mm] (C){
\begin{minipage}{.75\textwidth}\begin{center}
\underline{$\mathbf c^{4}_{[~]}$}\\[2mm]
\begin{multicols}{4}
   $[1,2,3]$\\[2pt]
   $[1,2]$\\[2pt]
   $[1,3,2]$\\[2pt]
   $[1,3]$\\[2pt]
   $[1]$\\[2pt]
   $[2,1,3,2]$\\[2pt]
   $[2,1,3]$\\[2pt]
   $[2,1]$\\[2pt]
   $[2,3]$\\[2pt]
   $[2]$\\[2pt]
   $[3,2,1]$\\[2pt]
   $[3,2]$\\[2pt]
   $[3]$\\[2pt]
   $[~]$\\[2pt]
   \end{multicols}
 \end{center}
 \end{minipage}
};

\begin{pgfonlayer}{background}

\path (C4213.north -| C.west)+(-.5cm,.4cm) node (ul1) {};
\path (C421324.south -| C.east)+(.5cm,-.3cm) node (lr1) {};
\draw[very thick, rounded corners, fill=black!20] (ul1) rectangle (lr1);
\node[fancytitle, xshift=.5cm,  anchor=west] at (ul1) (P1title) {$\mathcal P(4,0)$};

\path (C421.north -| C.west)+(-.5cm,.4cm)node (ul2) {};
\path (C423.south -| C.east)+(.5cm,-.3cm) node (lr2) {};
\draw[very thick, rounded corners, fill=black!20] (ul2) rectangle (lr2);
\node[fancytitle, xshift=.5cm,  anchor=west] at (ul2) (P2title) {$\mathcal P(4,1)$};

\path (C42.north -| C.west)+(-.5cm,.4cm)node (ul3) {};
\path (C42.south -| C.east)+(.5cm,-.3cm) node (lr3) {};
\draw[very thick, rounded corners, fill=black!20] (ul3) rectangle (lr3);
\node[fancytitle, xshift=.5cm,  anchor=west] at (ul3) (P3title) {$\mathcal P(4,2)$};

\path (C4.north -| C.west)+(-.5cm,.4cm)node (ul4) {};
\path (C4.south -| C.east)+(.5cm,-.3cm) node (lr4) {};
\draw[very thick, rounded corners, fill=black!20] (ul4) rectangle (lr4);
\node[fancytitle, xshift=.5cm,  anchor=west] at (ul4) (P4title) {$\mathcal P(4,3)$};

\path (C.north west)+(-.5cm,.4cm)node (ul5) {};
\path (C.south east)+(.5cm,-.3cm) node (lr5) {};
\draw[very thick, rounded corners, fill=black!20] (ul5) rectangle (lr5);
\node[fancytitle, xshift=.5cm,  anchor=west] at (ul5) (P5title) {$\mathcal P(4,4)$};

\end{pgfonlayer}
\end{tikzpicture}
\]
 \caption{The packets of $D_4$}\label{D4 packets}
\end{table}

We count the number of collections in a packet and obtain:
\begin{prop}
The size of the packet $\PP(n,k)$ is
\[
   \left | \PP(n,k) \right| =
   \left\{\begin{array}{cl}
   2^{n-2}-1 & \textrm{ if } k = 0, \\
   2^{n-k-2} & \textrm{ if } 1\le k \le n-2, \\
   1 & \textrm{ if } k= n-1, n. \\
   \end{array}
   \right.
\]
\end{prop}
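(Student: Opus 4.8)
The plan is to count, for each $k$, the number of distinct suffixes that label the collections making up $\PP(n,k)$, using the observation recorded in the earlier remark that a suffix is determined by the strictly increasing sequence $1 \le j_1 < j_2 < \cdots < j_\ell \le n-1$, equivalently by a subset of $\{1, 2, \dots, n-1\}$. The cases $k = n-2$, $k=n-1$, $k=n$ are immediate from the definition of the packet, since each of those packets is declared to consist of a single collection, giving $|\PP(n,k)| = 1$. So the real work is in the ranges $k = 0$ and $1 \le k \le n-3$.

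For $1 \le k \le n-3$, the packet $\PP(n,k)$ consists of the single collection labeled $s_{nk}$ together with all collections labeled by a suffix of the form $s_{n\, k+1} s_{n-1 j_2} s_{n j_3} \cdots s_{n-1+[\ell]_2\, j_\ell}$ with $\ell \ge 2$. Here the first index is fixed at $j_1 = k+1$, and the remaining indices form a strictly increasing sequence $k+1 < j_2 < j_3 < \cdots < j_\ell \le n-1$ with $\ell - 1 \ge 1$ terms; hence they range over the \emph{nonempty} subsets of $\{k+2, k+3, \dots, n-1\}$, a set of size $n-k-2$. This gives $2^{n-k-2} - 1$ such suffixes, and adding the one collection labeled $s_{nk}$ yields $|\PP(n,k)| = 2^{n-k-2}$. (One should double-check the edge case $k = n-3$: then $\{k+2, \dots, n-1\} = \{n-1\}$ has one element, there is one nonempty subset, namely $\{n-1\}$ itself corresponding to the suffix $s_{n\,n-2}s_{n-1}$, and together with $s_{n\,n-3}$ we get $2 = 2^{(n)-(n-3)-2}$, consistent; note this uses $\ell = 2$.)

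For $k = 0$, the packet consists of the collections labeled by suffixes $s_{n1} s_{n-1 j_2} s_{n j_3} \cdots$ with $\ell \ge 2$, so $j_1 = 1$ is fixed and $1 < j_2 < \cdots < j_\ell \le n-1$ ranges over the nonempty subsets of $\{2, 3, \dots, n-1\}$, which has $n-2$ elements; this gives $2^{n-2} - 1$ collections, hence $|\PP(n,0)| = 2^{n-2} - 1$. Unlike the case $k \ge 1$, there is no extra collection $s_{n0}$ to adjoin, since the index $0$ is not allowed. Assembling the three computations gives the stated formula. The only subtlety — and the step most worth being careful about — is the bookkeeping of the constraint $\ell \ge 2$, i.e. that the sequence $j_2 < \cdots < j_\ell$ must be nonempty; this is exactly what produces the ``$-1$'' in the $k=0$ case and what makes the count $2^{n-k-2}$ rather than $2^{n-k-2}-1$ once the lone collection $s_{nk}$ is included for $k \ge 1$. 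It is also worth confirming that the suffixes listed for distinct values of $k$ in fact label distinct collections and partition the homogeneous words, but this is already built into the definition of the packets, so no new argument is needed here.
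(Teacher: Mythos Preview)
Your proof is correct and follows essentially the same approach as the paper's own proof: both count the suffixes labeling collections in $\PP(n,k)$ by identifying the sequences $j_2 < \cdots < j_\ell$ with nonempty subsets of $\{k+2,\dots,n-1\}$ (respectively $\{2,\dots,n-1\}$ when $k=0$), obtaining $2^{n-k-2}-1$ such suffixes, and then adding one for the extra collection $\coll{n}{s_{nk}}$ when $1\le k\le n-3$. Your additional edge-case check at $k=n-3$ and the remark on why no ``$s_{n0}$'' term appears are helpful clarifications but do not change the argument.
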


It may be convenient to visualize these values in an array as in Table~\ref{Packet Size}, where the row is given by $n$ (starting at $0$), and the column is given by $k$ (also beginning at $0$).

\begin{table}[h]
\[\begin{array}{ccccccccc}
1\\
1  & 1\\
1  & 1  & 1\\
1  & 1  & 1 & 1\\
3  & 2  & 1 & 1 & 1\\
7  & 4  & 2 & 1 & 1 & 1\\
15& 8  & 4 & 2 & 1 & 1 & 1\\
31&16 & 8 & 4 & 2 & 1 & 1 & 1\\
\vdots  & \vdots &\vdots &\vdots &\vdots &\vdots &\vdots & \vdots& \ddots\\

\end{array}
\]
\caption{Triangle of Packet Sizes}\label{Packet Size}
\end{table}

\begin{proof} 
If $k=0$, a collection in the $(n,0)$-packet is determined by a sequence  $1<j_2< \dots <j_\ell \le n-1$, $\ell \ge2$. The number of such sequences is the same as the number of non-empty subsets of $\{ 2, 3, \dots , n-1 \}$, which is $2^{n-2}-1$. For $1 \le k \le n-3$,  a collection in the $(n,k)$-packet is labeled by $s_{nk}$ or is determined by a sequence  $k+1<j_2< \dots <j_\ell \le n-1$, $\ell \ge2$. The number of such sequences is the same as the number of non-empty subsets of $\{ k+2, k+3, \dots , n-1 \}$, which is $2^{n-k-2}-1$. Hence the total number of collections in the $(n,k)$-packet is $2^{n-k-2}$. The remaining cases $k=n-2, n-1, n$ are obvious.
\end{proof}

\begin{remark}
Recall that there are $2^{n-1}$ suffixes in total, and as a check, we see that 
\[ \sum_{k=0}^n  \left | \PP(n,k) \right|  =  2^{n-1} .\] 
\end{remark}

\subsection{Catalan's Triangle}
In this subsection, we will compute the size of a collection in a given packet, allowing us to classify and enumerate all homogeneous representations.  We begin by presenting a seemingly unrelated sequence.

  The array shown in Table \ref{CT} is known as {\em Catalan's Triangle} \cite{OEIS}.  The entry in the $n^{\textrm{th}}$ row and $k^\textrm{th}$ column is denoted by $C(n,k)$, for $0 \leq k \leq n$.  It can be built recursively: set the first entry $C(0,0)=1$, and then each subsequent entry is the sum of the entry above it and the entry to the left.  All entries outside of the range $0\leq k \leq n$ are considered to be $0$.
\begin{table}[h]
\[\begin{array}{ccccccccc}
1\\
1 & 1\\
1 & 2 & 2\\
1 & 3 & 5 & 5\\
1 & 4 & 9 & 14 & 14\\
1 & 5 & 14 & 28 & 42 & 42\\
1 & 6 & 20 & 48 & 90 & 132 & 132\\
1 & 7 & 27 & 75 & 165 & 297 & 429 & 429\\
\vdots & \vdots &\vdots &\vdots &\vdots &\vdots &\vdots & \vdots& \ddots\\

\end{array}
\]
\caption{Catalan's Triangle} \label{CT}
\end{table}

More precisely, for $n\geq0$ and $0\leq k \leq n$, we define
  \begin{equation}\label{sum rule}
    C(n,k) =
    \left\{
    \begin{array}{cl}
        1 & \textrm{ if } n=0 ;\\
        C(n,k-1)+C(n-1,k) & \textrm{ if } 0<k<n; \\
        C(n-1, 0) & \textrm{ if } k=0; \\
        C(n, n-1) & \textrm{ if } k=n.\\
    \end{array}
    \right.
  \end{equation}
The closed form for entries in this triangle is well known \cite{OEIS}.  For $n \geq 0$ and $0 \leq k \leq n$, we have
\begin{equation} \label{cat form}
  C(n,k) = \frac{(n+k)!(n-k+1)}{k!(n+1)!}
\end{equation}
One can also see that
\begin{equation} \label{ccc}
C_n=C(n, n-1)=C(n,n) ,
\end{equation}
where $C_n$ is the $n^{\textrm{th}}$ Catalan number.

\begin{lemma} \label{diag}
  For $n\geq 4$, each collection in the packets $\PP(n,n-1)$ and $\PP(n,n)$  contains $C_n$  homogeneous words in $\mathcal W_n$. 
\end{lemma}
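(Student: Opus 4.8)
The plan is to count directly the homogeneous words in $\mathcal W_n$ whose suffix is the empty word $[\,]$ (the case $\PP(n,n)$), and then to observe that the same count applies to $\PP(n,n-1)$ whose suffix is $[n]$. A word in the $(n,n)$-packet is just a prefix $s_{1i_1}s_{2i_2}\cdots s_{n-1\,i_{n-1}}$ with $1\le i_k\le k+1$, and such a word is automatically reduced; the only thing to check is homogeneity, i.e.\ condition \eqref{homog}. So I would first analyze when a prefix $s_{1i_1}\cdots s_{n-1\,i_{n-1}}$ (built from descending runs in the alphabet $\{1,\dots,n-1\}$, which is an $A_{n-1}$-type alphabet) is homogeneous, and show that the number of homogeneous prefixes of this shape is exactly $C_n$.

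The key combinatorial step is to recognize that a prefix $s_{1i_1}s_{2i_2}\cdots s_{n-1\,i_{n-1}}$ is precisely the canonical (Bokut--Shiau style) reduced word for an element of the Coxeter group of type $A_{n-1}$, under the obvious embedding of the $A_{n-1}$ Dynkin diagram on vertices $\{1,\dots,n-1\}$. Indeed, by the $A$-analogue of Lemma~\ref{dcan}, every element of $W(A_{n-1})$ has a unique reduced expression of exactly this form, and homogeneity of the word is equivalent to the element being fully commutative — this is Lemma~\ref{lem-equiv} applied in type $A_{n-1}$. Hence the number of homogeneous prefixes equals the number of fully commutative elements in $W(A_{n-1})$, which by Proposition~\ref{full comm count} is $C_n$ (the $n$-th Catalan number, since the count for $A_m$ is $C_{m+1}$ and here $m=n-1$). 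This identifies $|\coll{n}{[\,]}| = C_n$.

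For $\PP(n,n-1)$, the unique collection is $\coll{n}{[n]}$, consisting of words $\bw_0[n]$ where $\bw_0$ is a prefix. Appending the single letter $[n]$ to the end of $\bw_0$ creates a new potential homogeneity violation only if the letter $n$ (or one of its Dynkin-neighbors $n-1$, $n-2$) already occurs in $\bw_0$ in a way that now fails \eqref{homog}; but $n$ does not occur in any prefix at all (prefixes use only letters $1,\dots,n-1$), and $n-1$, $n-2$ are not neighbors of each other, so no new violation can arise — $\bw_0[n]$ is homogeneous if and only if $\bw_0$ is. Therefore $\coll{n}{[n]}$ is in bijection with $\coll{n}{[\,]}$ by $\bw_0\mapsto\bw_0[n]$, giving $|\coll{n}{[n]}| = C_n$ as well. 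Since each of $\PP(n,n-1)$ and $\PP(n,n)$ contains exactly one collection, the lemma follows.

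The main obstacle is making the identification ``homogeneous prefix $\leftrightarrow$ canonical reduced word of a fully commutative element of type $A_{n-1}$'' airtight: one must verify that the prefix shape $s_{1i_1}\cdots s_{n-1\,i_{n-1}}$ with $1\le i_k\le k+1$ is genuinely the canonical form for $W(A_{n-1})$ (a direct application of Lemma~\ref{dcan}'s prefix part, or of \cite{Bokut2001}), and that the notion of homogeneous word used here coincides with full commutativity — which is exactly Lemma~\ref{lem-equiv}(1) in type $A$. Everything else is a short check that appending $[n]$ preserves homogeneity, which is immediate from the structure of the $D_n$ diagram.
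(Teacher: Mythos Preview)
Your proposal is correct and follows essentially the same approach as the paper: both arguments observe that the prefixes are exactly the canonical reduced words for $W(A_{n-1})$, that homogeneity of such a word is equivalent to full commutativity (Lemma~\ref{lem-equiv}), and that appending the suffix $[\,]$ or $[n]$ imposes no further restriction since the letter $n$ never occurs in a prefix. One small slip: your aside that ``$n-1$, $n-2$ are not neighbors of each other'' is false in the $D_n$ diagram, but this is irrelevant to the argument, since the only way appending $[n]$ could break condition~\eqref{homog} is by creating a new repeated-letter pair, and there is none.
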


\begin{proof}
  Recall that each of these packets contains only one collection: $\PP(n,n-1) = \{\coll{n}{[n]}\}$ and $\PP(n,n) = \{\coll{n}{[~]}\}$.  Since neither of the suffixes labeling these packets contains any of the letters $1,2,\hdots, n-1$, there are no restrictions on the homogeneous prefixes that can be included.  Therefore, any  homogeneous word of type $A_{n-1}$ can be a prefix. Since a fully commutative element is represented by a unique homogeneous word in canonical reduced form,  there are exactly $C_n$ of homogeneous words of type $A_{n-1}$ by Proposition~\ref{full comm count}.
\end{proof}

The previous lemma shows that the diagonal and subdiagonal of Catalan's triangle do in fact count the sizes of collections in the corresponding packets.  We now give the main theorem of this section, which says that entries in the rest of Catalan's triangle also count the sizes of collections, naturally extending Lemma \ref{diag}. This theorem allows us to classify the homogeneous representations.

\begin{thm}\label {main D}Assume that $n\ge 4$ and $0\le k \le n$. Then any collection in the packet $\PP(n,k)$ contains exactly $C(n,k)$
  elements.
\end{thm}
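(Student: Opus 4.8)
The plan is to induct on $n$ and, for fixed $n$, on $k$, using the recursion \eqref{sum rule} for Catalan's triangle. The base of the induction on $k$ is already handled: Lemma~\ref{diag} shows that a collection in $\PP(n,n-1)$ or $\PP(n,n)$ has $C_n=C(n,n-1)=C(n,n)$ elements, and the packets $\PP(n,n-2)$, $\PP(n,n-1)$, $\PP(n,n)$ are singletons whose unique collections can be counted directly (for $\PP(n,n-2)$ the suffix $[n,n-2]$ forces the prefix to avoid having a repeated letter $\le n-2$ without two intervening neighbors, and one checks this gives $C(n,n-2)$). So it remains to treat $0\le k\le n-3$, and by Corollary~\ref{num} it suffices to count a single convenient collection in each such packet — namely $\coll{n}{s_{nk}}$ for $1\le k\le n-3$ (which exists by Proposition~\ref{type size}), and for $k=0$ the collection $\coll{n}{s_{n1}s_{n-1}}$, which lies in $\PP(n,0)$.

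The heart of the argument is a bijection realizing the sum rule $C(n,k)=C(n,k-1)+C(n-1,k)$ for $1\le k\le n-1$, together with the boundary rule $C(n,0)=C(n-1,0)$. Concretely, I would partition the homogeneous words in $\coll{n}{s_{nk}}$ according to the last (non-empty) segment $s_{1i_1}\cdots s_{m-1\,i_{m-1}}$ of the prefix — or rather according to whether the prefix "reaches up to" the letter $n-1$. Recall from the proof of Proposition~\ref{type size} that the last letter $r$ of the prefix of a word in $\coll{n}{s_{nk}}$ must satisfy $r<k$ or $r=n-1$. Those words whose prefix ends in a letter $<k$ should biject with a collection in the packet $\PP(n,k-1)$: deleting the final letter $k$ of the suffix $s_{nk}=[n,n-2,\dots,k]$ turns it into $s_{n\,k}$ shortened, i.e. into $[n,n-2,\dots,k+1]=s_{n\,k+1}s_{n-1\,?}\cdots$ after a homogeneity-preserving adjustment of the last suffix letter (exactly the map $\sigma$ of Proposition~\ref{type size} restricted appropriately), landing in a collection of $\PP(n,k-1)$. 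Those words whose prefix ends in $n-1$ — write the maximal ascending tail as $s_ms_{m+1}\cdots s_{n-1}$ — should biject with a collection of type $D_{n-1}$ in the packet $\PP(n-1,k)$: strip off the letter $n$ (which appears only at the start of the suffix) and the ascending tail, reindexing $\{1,\dots,n-1\}$ so that generator $n-1$ of $D_n$ is removed and generator $n$ plays the role of the new top node. One verifies (using the homogeneity bookkeeping already developed in the proofs of Lemma~\ref{type2} and Proposition~\ref{type size}) that this passage is a bijection onto a full collection in $\mathcal W_{n-1}$ with suffix $s_{(n-1)\,k}$. Adding the two cardinalities and invoking the inductive hypothesis gives $C(n,k-1)+C(n-1,k)=C(n,k)$. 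The $k=0$ case is the boundary rule: the prefix cannot end in a letter $<0$, so only the "prefix ends in $n-1$" case survives, and the same stripping map gives a bijection with a collection in $\PP(n-1,0)$, yielding $C(n-1,0)=C(n,0)$.

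The main obstacle is making the "strip off $n$ and the ascending tail, then reindex to $D_{n-1}$" map precise and checking it is a well-defined bijection that respects homogeneity in both directions — in particular, that the reindexed prefix is exactly a generic homogeneous $D_{n-1}$-prefix (no spurious constraints, no missing ones) and that the suffix $s_{nk}$ of $D_n$ corresponds correctly to the suffix $s_{(n-1)\,k}$ of $D_{n-1}$ under the relabeling. This is delicate because the node $n$ of $D_n$ has two neighbors $n-2$ and (via the fork) effectively interacts with $n-1$, so the neighbor relations used in condition~\eqref{homog} must be tracked carefully through the relabeling; the ascending string $[m,\dots,n-1]$ must be shown to be precisely the obstruction that is removed. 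I expect the verification to mirror, almost line for line, the homogeneity checks already carried out for $\sigma$ and $\tau$ in Proposition~\ref{type size}, so the difficulty is organizational rather than conceptual. Once the two bijections are in hand, the theorem follows by the double induction together with \eqref{sum rule}.
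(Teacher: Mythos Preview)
Your architecture is the paper's: handle $k=n-1,n$ by Lemma~\ref{diag}, handle $k=0$ directly (the paper shows the prefix must be empty, so the collection is a singleton), check $n=4$ by inspection, and for $1\le k\le n-2$ realize the recursion $C(n,k)=C(n,k-1)+C(n-1,k)$ by splitting $\coll{n}{s_{nk}}$ according to whether the prefix ends in the letter $n-1$. (The case $k=n-2$ is absorbed into this bijection; it is not a separate base case.)

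Both of your proposed bijections are wrong in detail, however. For the branch ``prefix ends in a letter $<k$'': deleting the final $k$ from $s_{nk}$ produces $s_{n\,k+1}$, whose collection lies in $\PP(n,k+1)$, and the map $\sigma$ of Proposition~\ref{type size} lands in $\coll{n}{s_{n\,k+1}s_{n-1}}\in\PP(n,k)$; neither reaches $\PP(n,k-1)$. The paper instead \emph{appends} $n-1$ to the suffix: $\pw\,s_{nk}\mapsto \pw\,s_{nk}s_{n-1}$. Since $s_{nk}s_{n-1}$ has the form $s_{n\,(k-1)+1}s_{n-1\,j_2}$ with $\ell=2$, it labels a collection in $\PP(n,k-1)$, and the map is a bijection onto that collection precisely because $\pw$ does not end in $n-1$. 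For the branch ``prefix ends in $n-1$'': there is no ascending tail to strip and no reindexing to do. In canonical form, a prefix ending in $n-1$ has final nonempty segment exactly $s_{n-1}=[n-1]$, so $\pw=\pw'\,[n-1]$ with $\pw'$ a word in $\{1,\dots,n-2\}$. Viewing $D_{n-1}$ on the label set $\{1,\dots,n-1\}$ (so $n-1$ is a fork node adjacent to $n-3$), the paper sends $\pw'\,[n-1]\,[n,n-2,\dots,k]$ to $\pw'\,[n-1,n-3,\dots,k]\in\coll{n-1}{s_{(n-1)\,k}}$: drop $n$ and $n-2$, and let the trailing $n-1$ of $\pw$ become the first letter of the $D_{n-1}$ suffix. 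The ascending-tail manoeuvre you invoke is the map $\tau$ from Proposition~\ref{type size}, which arises when the prefix ends in the letter $k$; it plays no role here.
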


\begin{proof}
  Note that Lemma~\ref{diag} proves the case when $k=n-1 \textrm{ or } n$.  We next consider the packet $\PP(n,0)$, which consists of the collections labeled by the prefixes  $s_{n1}s_{n-1j_2}s_{nj_3}s_{n-1j_4}\cdots s_{n-1+[\ell]_2\, j_\ell}$ $(\ell \ge 2)$. By Lemma \ref{type2}, it is enough to consider the collection $\bf{c}$ labeled by $s_{n1}s_{n-1}$. 
We claim that no prefix is possible except the empty word  $[~]$.  Indeed, if a word $\bw$ in $\bf{c}$ contains a non-empty prefix $\pw$ ending with the letter $r$ for $1\leq r \leq n-1$, then the word $\bw$ contains the factor $[ r, n, n-2, \dots, 1, n-1]$ and we immediately reach a contradiction to homogeneity.  Thus the collection $\bf{c}$, and hence every collection in $\PP(n,0)$, contains only one element, the suffix itself. Since  $C(n,0)=1$, we are done with this case.

We see in Table \ref{D4 packets} that any collection in the packet $\PP(4,k)$ contains exactly $C(4,k)$ elements. Thus the assertion of the theorem is true for $n=4$. 
Recall the recursive definition of $C(n,k)$:   for $1 \le k \le n-1$,
\begin{equation} \label{sum-rule}
C(n,k) = C(n, k-1) + C(n-1, k). 
\end{equation}
We will prove that the collection sizes of the packets satisfy the relation given in equation \eqref{sum-rule}. Since we already checked the base cases ($k=0$, $n=4$), the theorem will be established by induction. Further, by Corollary \ref{num}, it will be enough to check that the relation holds for a single collection in each of the packets.

Now assume that $n > 4$ and $1 \leq k \leq n-2$.  We define  
\begin{align*} & \bw_1= s_{nk}s_{n-1}=[n, n-2, \dots, k, n-1],  \quad \bw_2=s_{nk}=[n, n-2, \dots , k], \\
&  \bw_3=\begin{cases} [n-1, n-3, \dots , k] &\quad \text{ if } k<n-2 , \\ [n-1] & \quad \text{ if } k=n-2 . \end{cases}
\end{align*}

 Then $\coll{n}{\bw_1} \in \PP(n, k-1)$, $\coll{n}{\bw_2} \in \PP(n, k)$ and $\coll{n-1}{\bw_3} \in \PP(n-1, k)$. We will give an explicit bijection from  $\coll{n}{\bw_1}\cup\coll{n-1}{\bw_3}$ to $\coll{n}{\bw_2}$. Since $\coll{n}{\bw_1}\cap\coll{n-1}{\bw_3} = \emptyset$ as sets of formal words, we will have
 \begin{equation} \label{end}  |\coll{n}{\bw_1}|+|\coll{n-1}{\bw_3}| = |\coll{n}{\bw_2}| ,\end{equation}
 and the proof will be completed by induction since the collection sizes satisfy the same recursive relation as \eqref{sum-rule}.

   Define the map $\varphi_1: \coll{n}{\bw_1} \to \coll{n}{\bw_2}$ by
   \[
    \varphi_1(\pw \bw_1) = \varphi(\pw [n, n-2, \dots , k, n-1] )= \pw \bw_2 = \pw [n, n-2, \dots , k] .
   \]  Clearly, removing the last letter of the suffix will not affect the homogeneity of a word, and hence $\varphi_1$  maps $\coll{n}{\bw_1}$ into $\coll{n}{\bw_2}$.
   Similarly,  we define the map $\varphi_2: \coll{n-1}{\bw_3} \to  \coll{n}{\bw_2}$ to be
   \[\varphi_2(\pw\bw_3) = \pw s_{n-1} \bw_2= \pw s_{n-1} [ n, n-2, n-3, \dots, k].\]
To see that the image is homogeneous, we first consider the case when  $\pw$ contains the letter $n-2$. In such a case, there are two neighbors $n-1$ and $n$ between two occurrences of $n-2$. Now consider the other case when $\bw_0$ contains $i<n-2$. In this case, there is a neighbor to
the right of $i$ in $\pw$ by homogeneity of $\bw_0\bw_3$ and also an $i+1$ in the suffix to
the left of $i$.
Hence,   $\varphi_2$  maps $\coll{n-1}{\bw_3}$ into $\coll{n}{\bw_2}$.
 Note that the images of $\varphi_1$ and $\varphi_2$ are disjoint: the words in the image of $\varphi_2$ all have prefixes that end with the letter $n-1$, while none of the words in the image of $\varphi_1$ do since $\pw$ cannot end with $n-1$ due to the homogeneity condition on $\bw_0\bw_1$.

Finally, we define the map $\varphi: \coll{n}{\bw_1}\cup\coll{n-1}{\bw_3} \to \coll{n}{\bw_2}$ to be the combination of $\phi_1$ and $\phi_2$, i.e. the restriction of $\phi$  to $\coll{n}{\bw_1}$ is defined to be $\varphi_1$ and the restriction of $\phi$ to $\coll{n-1}{\bw_3}$ is defined to be $\varphi_2$.
  Figure~\ref{Packet Map} shows an example of the maps $\varphi_1$ and $\varphi_2$ in the case of $n=5$ and $k=2$.
   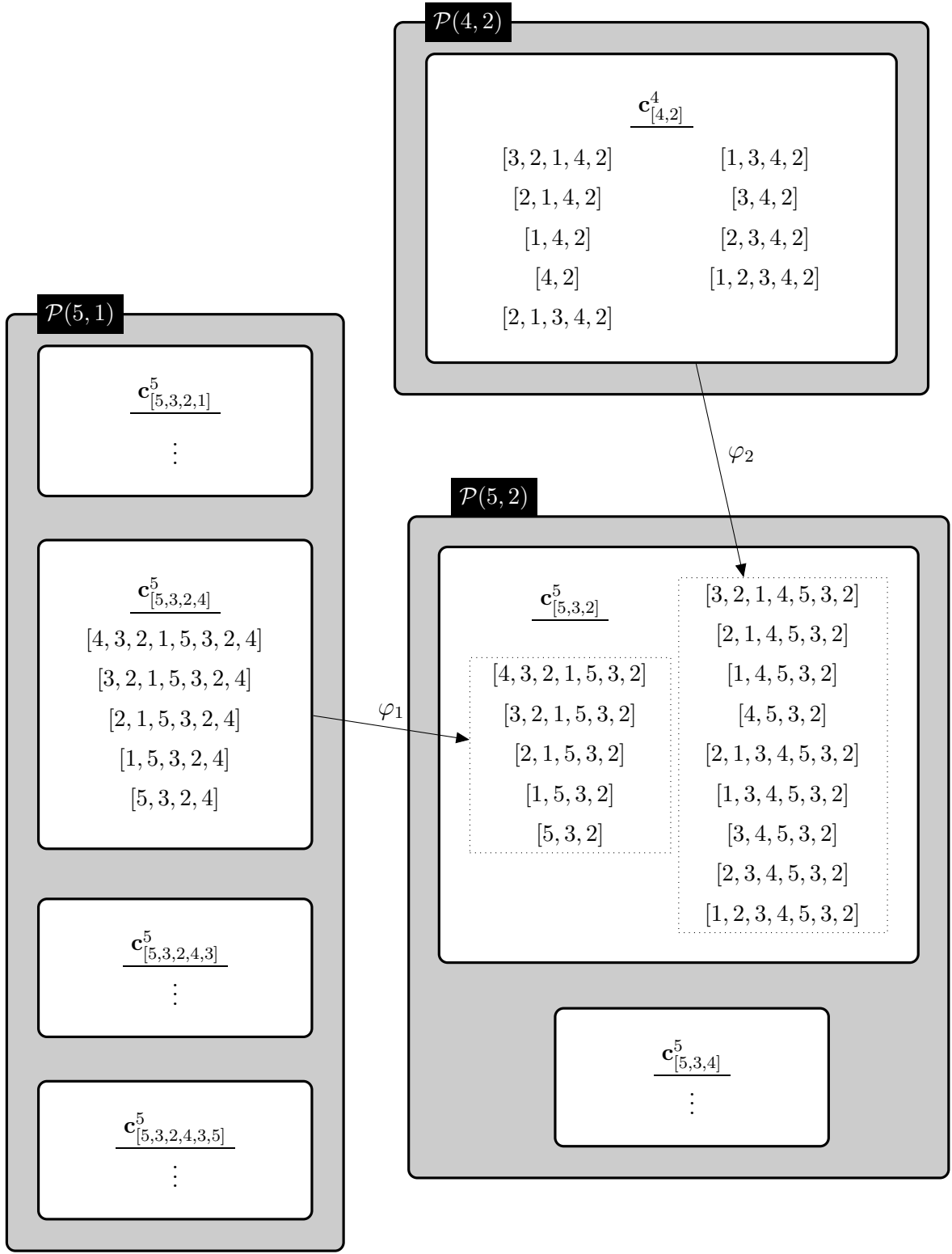
\begin{figure}

\pgfdeclarelayer{background}
\pgfdeclarelayer{foreground}
\pgfsetlayers{background,main,foreground}

\tikzstyle{coll} = [draw, very thick, fill=white,
    rectangle, rounded corners, inner sep=18pt, inner ysep=15pt, minimum height=2in]
\tikzstyle{fancytitle} =[fill=black, text=white]

\[
\begin{tikzpicture}

\node[coll] (C5324){
\begin{minipage}{.2\textwidth}  \begin{center}
 {\underline{~$\coll{5}{[5,3,2,4]}$~}\\[2mm]
    $[4,3,2,1,5,3,2,4]$\\[2mm]
    $[3,2,1,5,3,2,4]$\\[2mm]
    $[2,1,5,3,2,4]$\\[2mm]
    $[1,5,3,2,4]$\\[2mm]
    $[5,3,2,4]$\\[2mm]}
    \end{center}
\end{minipage}
};

\node[coll, above of = C5324, minimum height=1cm, yshift=3.5cm] (C5321){
\begin{minipage}{.2\textwidth}  \begin{center}
 \underline{~$\mathbf c^{5}_{[5,3,2,1]}$~}\\[2mm]
$\vdots$
\end{center}
\end{minipage}
};

\node[coll, below of = C5324, minimum height=1cm, yshift=-3.5cm] (C53243){
\begin{minipage}{.2\textwidth} \begin{center} 
 \underline{~$\mathbf c^{5}_{[5,3,2,4,3]}$~}\\
   $\vdots$\\
   \end{center}
\end{minipage}
};

\node[coll, below of = C53243, minimum height=1cm, yshift=-2cm] (C532435){
\begin{minipage}{.2\textwidth}  \begin{center}
    \underline{~$\mathbf c^{5}_{[5,3,2,4,3,5]}$~}\\
   $\vdots$\end{center}
\end{minipage}
};

\node[coll,
right of=C5324,
yshift=8cm,
node distance=8cm,
anchor = center] (C42){
\begin{minipage}{0.4 \textwidth} \begin{center}
    \underline{~$\mathbf c^{4}_{[4,2]}$~}\\[2pt]
  \begin{multicols}{2}
    $[3,2,1,4,2]$\\[2mm]
    $[2,1,4,2]$\\[2mm]
    $[1,4,2]$\\[2mm]
    $[4,2]$\\[2mm]
    $[2,1,3,4,2]$\\[2mm]
    \columnbreak
    $[1,3,4,2]$\\[2mm]
    $[3,4,2]$\\[2mm]
    $[2,3,4,2]$\\[2mm]
    $[1,2,3,4,2]$\\ 
\end{multicols}\end{center}
\end{minipage}
};

\begin{pgfonlayer}{foreground}
\node[draw, dotted, right of=C5324,node distance=6.5cm, yshift=-1cm,inner sep=1pt, inner ysep=1mm](C532A){
\begin{minipage}{.2\textwidth}\begin{center}
    $[4,3,2,1,5,3,2]$\\[2mm]
    $[3,2,1,5,3,2]$\\[2mm]
    $[2,1,5,3,2]$\\[2mm]
    $[1,5,3,2]$\\[2mm]
    $[5,3,2]$\end{center}
    \end{minipage}
};

\node[draw, dotted,
 right of=C532A,
 node distance = 3.5cm, inner sep=1mm] (C532B){
\begin{minipage}{0.2 \textwidth}\begin{center}
    $[3,2,1,4,5,3,2]$\\[2mm]
    $[2,1,4,5,3,2]$\\[2mm]
    $[1,4,5,3,2]$\\[2mm]
    $[4,5,3,2]$\\[2mm]
    $[2,1,3,4,5,3,2]$\\[2mm]
    $[1,3,4,5,3,2]$\\[2mm]
    $[3,4,5,3,2]$\\[2mm]
    $[2,3,4,5,3,2]$\\[2mm]
    $[1,2,3,4,5,3,2]$
  \end{center}
\end{minipage}
};

\node[above of=C532A, yshift=1.5cm] (C532) {
  \underline{~$\mathbf c^{5}_{[5,3,2]}$~}
};

\end{pgfonlayer}

\node[coll, below of = C532B, yshift=-4.3cm, xshift=-1.5cm, minimum height=1cm] (C534){
  \begin{minipage}{.2\textwidth}  \begin{center}
    \underline{~$\mathbf c^{5}_{[5,3,4]}$~}\\
   $\vdots$\end{center}
\end{minipage}
};

\path (C532B.north east)+(.5cm,.5cm) node (ur){};
\draw[very thick, rounded corners, fill=white] (C532A.south west)+(-.5cm,-1.8cm) rectangle (ur);

\path (C5324) edge[->, >=triangle 45] node[above] {\large$\varphi_1$} (C532A)
	(C42) edge[->, >=triangle 45] node[above right] {\large$\varphi_2$} (C532B);

\begin{pgfonlayer}{background}
\path (C5321.north east)+(.5cm,.5cm) node (urP1) {};
\draw[very thick, rounded corners, fill=black!20] (C532435.south west)+(-.5cm,-.5cm) rectangle (urP1);
\node[fancytitle, yshift=.5cm, anchor=west] at (C5321.north west)(P1title) {$\mathcal P(5,1)$};

\path (C42.north east)+(.5cm,.5cm) node (urP2) {};
\draw[very thick, rounded corners, fill=black!20] (C42.south west)+(-.5cm,-.5cm) rectangle (urP2);
\node[fancytitle, yshift=.5cm, anchor=west] at (C42.north west)(P1title) {$\mathcal P(4,2)$};

\path (C534.south -| C532A.west)+(-1cm,-.5cm) node (ll) {};
\path (ur)+(.5cm,.5cm) node (urP3) {};
\draw[very thick, rounded corners, fill=black!20] (ll) rectangle (urP3);
\node[fancytitle, xshift=-1.2cm, yshift=1.3cm, anchor=west] at (C532.north west)(P1title) {$\mathcal P(5,2)$};
\end{pgfonlayer}	

\end{tikzpicture}
\]
    \caption{The maps $\varphi_1$ and $\varphi_2$ into the packet $\PP(5,2)$.}\label{Packet Map}
   \end{figure}

 Now we define the  map $\rho: \coll{n}{\bw_2} \to \coll{n}{\bw_1}\cup\coll{n-1}{\bw_3}$ to be given by the rule:
   \[
   \rho(\pw\bw_2) = \rho(\pw [n, n-2, \dots , k]) =
   \left\{\begin{array}{ll}
    \pw [ n-3, \dots, k] \in \coll{n-1}{\bw_3}  & \textrm{ if } \pw \textrm{ ends with } n-1, \\~
   \pw \bw_1 \in \coll{n}{\bw_1} & \textrm{ otherwise.}
   \end{array}\right.
   \]
In the case where $\pw$ ends with $n-1$, we first check homogeneity with the letter $n-3$ in $\pw$. In passing to $D_{n-1}$, the letters $n-1$ and $n-3$ become neighbors and homogeneity follows because there is another neighbor of $n-3$ further to the right in $\pw$. Now we check homogeneity when $\pw$ contains $i<n-3$. Note that there is a neighbor to
the right of $i$ in $\pw$ by homogeneity of $\bw_0\bw_2$ and also an $i+1$ in the suffix to
the left of $i$.
 In the case where $\pw$ does not end with $n-1$, it is still  possible that $\pw$ contains $n-1$, but since it would be part of a descending sequence at the end of the prefix, there will always be two instances of the neighbor $n-2$ between the two occurrences of $n-1$.  Thus the map $\rho$ is well defined.

Now one can check that $\rho$ is the two-sided inverse of $\phi$. In particular, if we restrict $\rho$ to the words whose prefixes end with $n-1$, then we obtain the inverse for $\varphi_2$, while if we restrict to the prefixes not ending in $n-1$, we have the inverse for $\varphi_1$. 
This establishes \eqref{end} and completes the proof.	
\end{proof}

We obtain immediate consequences for homogeneous representations of $KLR$ algebras.

\begin{cor} \label{cor-DD}
Assume that  $R=\bigoplus_{\alpha\in Q_+}R_\alpha$ is the  type-$D_{n}$ KLR algebra.
\begin{enumerate}
\item The set of irreducible homogeneous representations of $R$ is decomposed into packets and collections according to the decomposition of the set of fully commutative elements (or of the set of homogeneous words in $\mathcal W_n$).
\item Each entry in Catalan's triangle counts the number of homogeneous representations of $R$ in the same collection. 
\end{enumerate}
\end{cor}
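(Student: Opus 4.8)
The plan is simply to chain together the results already established above. First I would recall Theorem~\ref{com}: for each $\alpha \in Q_+$ the assignment $C \mapsto S(C)$ is a bijection between the homogeneous components of the weight graph $G_\alpha$ and the isomorphism classes of irreducible homogeneous $R_\alpha$-modules. Since modules over $R_\alpha$ and $R_{\alpha'}$ for $\alpha \neq \alpha'$ are supported on disjoint sets of idempotents and hence never isomorphic, summing over $\alpha$ shows that the irreducible homogeneous representations of $R = \bigoplus_{\alpha} R_\alpha$ are in bijection with the homogeneous components of all the weight graphs $G_\alpha$ taken together.

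Next I would apply Lemma~\ref{lem-equiv}(1)--(2): a homogeneous component of a weight graph has as its vertex set exactly the collection of reduced words of a single fully commutative element of the Coxeter group $W$ of type $D_n$, and this sets up a bijection between homogeneous components and fully commutative elements. Combining this with Lemma~\ref{dcan}, which assigns to each fully commutative element its unique canonical reduced word in $\mathcal W_n$ (necessarily a homogeneous word, and conversely every homogeneous word in $\mathcal W_n$ arises this way), I obtain a canonical bijection between the irreducible homogeneous representations of $R$ and the homogeneous words in $\mathcal W_n$.

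Part~(1) is then transport of structure: the homogeneous words in $\mathcal W_n$ are partitioned by suffix into collections $\coll{n}{\sw}$, and the collections are grouped into the packets $\PP(n,k)$ for $0 \le k \le n$; pulling this partition back through the bijection decomposes the set of irreducible homogeneous representations of $R$ into packets and collections, matching the decomposition of the set of fully commutative elements (equivalently, of the set of homogeneous words in $\mathcal W_n$). For part~(2), Theorem~\ref{main D} states that every collection in $\PP(n,k)$ contains exactly $C(n,k)$ homogeneous words, and hence --- again through the bijection --- exactly $C(n,k)$ irreducible homogeneous representations.

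I do not expect any genuine obstacle here: the corollary is a bookkeeping consequence of Theorem~\ref{com}, Lemma~\ref{lem-equiv}, Lemma~\ref{dcan}, and Theorem~\ref{main D}. The one point worth stating explicitly in the write-up is that the bijection of Theorem~\ref{com} is stated for a \emph{fixed} $\alpha$, so one must observe that the union over all $\alpha \in Q_+$ of these bijections is again a bijection; this is immediate since homogeneous components living over different $\alpha$ produce modules with disjoint idempotent supports, so no identifications occur across different $\alpha$.
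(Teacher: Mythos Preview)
Your proposal is correct and matches the paper's approach: the paper states this corollary as an ``immediate consequence'' of the preceding results without giving a separate proof, and your argument is precisely the intended chaining of Theorem~\ref{com}, Lemma~\ref{lem-equiv}, Lemma~\ref{dcan}, and Theorem~\ref{main D}. The only addition in your write-up is the explicit remark about gluing the bijections over different $\alpha$, which is harmless and, as you note, immediate.
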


\begin{cor} \label{cor-end}
For $n\ge 4$, we obtain the identity:
\begin{equation} \label{eqn-end} \sum_{k=0}^n C(n,k) \left | \PP(n, k) \right | = \frac {n+3} 2 C_n -1 .\end{equation}
\end{cor}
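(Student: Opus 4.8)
The plan is to recognize the left-hand side of \eqref{eqn-end} as the total count of fully commutative elements of type $D_n$, and then quote Proposition \ref{full comm count}. So the whole corollary should follow by assembling results already proved, with no new combinatorics beyond a routine bookkeeping check.

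First I would record that the collections partition the homogeneous words of $\mathcal{W}_n$. By Lemma \ref{dcan}, each element of the Coxeter group of type $D_n$ has a unique canonical reduced word, hence a unique suffix; therefore every homogeneous word in $\mathcal{W}_n$ lies in exactly one collection $\coll{n}{\sw}$, namely the one labeled by its own suffix (which is itself homogeneous by Lemma \ref{every suffix}). Combined with Lemma \ref{lem-equiv}, which identifies homogeneous words in $\mathcal{W}_n$ with fully commutative elements, this shows the collections $\{\coll{n}{\sw}\}$, as $\sw$ ranges over all suffixes, partition the set of fully commutative elements of type $D_n$.

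Next I would observe that the packets $\PP(n,0), \dots, \PP(n,n)$ partition the set of suffixes. This is a finite inspection from the definition of the packets, sorting a suffix $s_{nj_1}s_{n-1 j_2}\cdots$ according to its length $\ell$ and first index $j_1$; it is already implicit in the Remark that $\sum_{k=0}^n |\PP(n,k)| = 2^{n-1}$, the total number of suffixes. Grouping collections by their packet then gives
\[
\#\{\text{fully commutative elements of type } D_n\} \;=\; \sum_{k=0}^n \ \sum_{\coll{n}{\sw}\in\PP(n,k)} \bigl|\coll{n}{\sw}\bigr|.
\]
By Theorem \ref{main D} each collection in $\PP(n,k)$ has exactly $C(n,k)$ elements, and $\PP(n,k)$ contains $|\PP(n,k)|$ collections, so the inner sum is $C(n,k)\,|\PP(n,k)|$. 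Hence the left-hand side of \eqref{eqn-end} equals the number of fully commutative elements of type $D_n$, which by Proposition \ref{full comm count} is $\tfrac{n+3}{2}C_n - 1$.

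The only mild obstacle is the verification that the packets exhaust the suffixes without overlap; but this is just the finite check described above, and there is nothing deep in it. (An alternative, purely computational route would substitute the closed form \eqref{cat form} and the packet sizes $|\PP(n,k)|$ into the sum and evaluate it directly by telescoping; I would prefer the counting argument, since it also explains why the type-$D_n$ Temperley--Lieb dimension appears on the right-hand side.)
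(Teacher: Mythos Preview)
Your proposal is correct and matches the paper's own proof, which simply states that the identity follows from Proposition~\ref{full comm count} and Theorem~\ref{main D}. You have merely spelled out the implicit bookkeeping (that the collections partition the fully commutative elements and the packets partition the collections), but the approach is the same.
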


\begin{proof}
The identity follows from Proposition \ref{full comm count} and Theorem \ref{main D}. 
\end{proof}

\begin{remark}
As pointed out in Introduction,  the identity \eqref{eqn-end} suggests that there might be a  representation theoretic construction, in which   $C(n,k)$ would correspond to the dimension of a representation and  $|\PP(n,k)|$ to its multiplicity. Also recall that the right-hand side of \eqref{eqn-end} is equal to the dimension of the Temperley--Lieb algebra of type $D_n$. 
\end{remark}

\vskip 1 cm

\bibliography{Hom_Rep}
\bibliographystyle{amsalpha}

\end{document}